\numberwithin{equation}{section}
\numberwithin{figure}{section}
\theoremstyle{plain}
\newtheorem{thm}{\protect\theoremname}
  \theoremstyle{remark}
  \newtheorem{rem}[thm]{\protect\remarkname}
 \theoremstyle{definition}
 \newtheorem*{defn*}{\protect\definitionname}
  \theoremstyle{plain}
  \newtheorem{lem}[thm]{\protect\lemmaname}
  \theoremstyle{definition}
  \newtheorem{defn}[thm]{\protect\definitionname}
  \theoremstyle{plain}
  \newtheorem{prop}[thm]{\protect\propositionname}
  \theoremstyle{plain}
  \newtheorem{cor}[thm]{\protect\corollaryname}
  \providecommand{\corollaryname}{Corollary}
  \providecommand{\definitionname}{Definition}
  \providecommand{\lemmaname}{Lemma}
  \providecommand{\propositionname}{Proposition}
  \providecommand{\remarkname}{Remark}
\providecommand{\theoremname}{Theorem}
\begin{document}

\title[Conformal Invariance of Ising crossing probabilities with free b.c.]{Conformal invariance of crossing probabilities for the Ising model with free boundary conditions}

\author{St\'ephane Benoist, Hugo Duminil-Copin and Cl\'ement Hongler}

\begin{abstract}
We prove that crossing probabilities for the critical planar Ising model with free boundary conditions are conformally invariant in the scaling limit, a phenomenon first investigated numerically by Langlands, Lewis and Saint-Aubin \cite{LaLeSA_UnivIs}. We do so by establishing the convergence of certain exploration processes towards SLE$(3,\frac{-3}2,\frac{-3}2)$. We also construct an exploration tree for free boundary conditions, analogous to the one introduced by Sheffield \cite{Sh_ExplTreesCLE}.
\end{abstract}

\maketitle

\section{Introduction}

\subsection{Definition of the Ising model}

In this article, $\mathbb Z^2$ denotes the integer lattice $\{x=(x_1,x_2), x_1,x_2\in\mathbb Z\}$. Two vertices $x$ and $y$ of $\mathbb Z^2$ are neighbors if $\|x-y\|_1:=|x_1-y_1|+|x_2-y_2|=1$. In such case, we write $x\sim y$. Each subset $\mathcal{G}$ of $\mathbb Z^2$ can be seen as a graph by considering the graph induced by $\mathbb Z^2$ on the vertex set $\mathcal{G}$ : the edge-set $\mathcal{E}_\mathcal{G}$ of $\mathcal{G}$ consists of all edges of the lattice $\mathbb Z^2$ that links two vertices of $\mathcal{G}$ together. Let us also consider the dual graph $(\mathbb Z^2)^*$ of $\mathbb Z^2$ whose vertices sit at the center of faces of $\mathbb Z^2$, and whose edges are in one-to-one correspondence with edges in $\mathbb Z^2$. We define the boundary of $\mathcal{G}$ to be the set of edges $\partial \mathcal{G}=\{e=(x,y), x\sim y \text{ such that } x\in \mathcal{G}\text{ and } y\notin \mathcal{G}\}$. We sometimes abusively identify a boundary edge $(x,y)$ with one of its endpoint. 

For a domain $\Omega \subset \mathbb{C}$ - i.e. an open subset of the plane, we build its discrete approximation $\Omega^\delta$ of mesh size $\delta$ as the `big connected component' of the graph $\Omega\cap(\delta\mathbb Z^2)$ (for example $\Omega^\delta$ is the connected component containing a marked interior point $x_0\in\Omega$).
 Finally, define $(\Omega^\delta)^*$ to be the subgraph of $\delta(\mathbb Z^2)^*$ generated by vertices corresponding to faces of $\delta\mathbb Z^2$ that are either included in or adjacent to a face of $\Omega^\delta$.

The Ising model is one of the most classical models in equilibrium
statistical mechanics. A configuration of the Ising model in a domain $\Omega^\delta$ is an element of $\{-1,1\}^{\Omega^\delta}$ - sometimes denoted by $\{-,+\}^{\Omega^\delta}$, and the Ising model at inverse-temperature $\beta>0$ with free boundary condition is given by the probability measure

$$
\mu^{\rm free}_{\Omega^\delta,\beta}(\sigma)=\frac1{Z^{\rm free}(\Omega^\delta,\beta)}\exp\Big(\beta\sum_{\substack{x,y\in\Omega^\delta\\ x\sim y}}\sigma_x\sigma_y\Big),
$$
where the partition function $Z^{\rm free}(\Omega^\delta,\beta)$ is defined so that $\mu^{\rm free}_{\Omega^\delta,\beta}$ is a probability measure.
One may define an infinite-volume measure $\mu^{\rm free}_\beta$ on $\{-1,1\}^{\delta \mathbb{Z}^2}$ by taking the weak limit of finite-volume measures.

As was predicted
by Kramers and Wannier \cite{KW_ising1}, and shown by Onsager
\cite{Onsager_Cry}, a phase transition occurs at $\beta_{c}=\frac{1}{2}\ln\left(1+\sqrt{2}\right)$:
\begin{itemize}
\item For $\beta<\beta_{c}$, there exists $\tau=\tau(\beta)>0$ such that for any $\delta>0$ and any $x,y\in\Omega^\delta$,
$$\mu^{\rm free}_{\beta}(\sigma_{x}\sigma_{y})\le \exp(-\tau\|x-y\|_1/\delta).$$ 
\item For $\beta>\beta_{c}$, there exists $m=m(\beta)>0$ such that for any $\delta>0$ and any $x,y\in\Omega^\delta$,
$$\mu^{\rm free}_\beta(\sigma_{x}\sigma_{y})\ge m.$$
\end{itemize}
The fine description of what happens at and near the critical point has been the subject of more than sixty years of investigation. Two successful approaches enabled mathematicians and physicists to study this critical phase. On the one hand, the exact solution led to many explicit formulae for spin-spin correlations and other thermodynamical quantities; see \cite{Baxter_exact,MW_ising,Palmer_planar} and references therein for further details. On the other hand, the scaling limit of the model was conjectured to be conformally invariant using renormalization group arguments \cite{Fi_Renormalization,FrMaSe_CFT}, a prediction which led to a deep understanding of the critical phase, albeit non-rigorous. In recent years, conformal invariance of the Ising model became the object of an intense mathematical effort. Chelkak and Smirnov \cite{Smi_ICM,SmiChe_Ising} proved conformal invariance of the so-called fermionic observable, a property which led to the proof of convergence of interfaces in domains with Domain-Wall boundary conditions (or Dobrushin boundary conditions) to the so-called Schramm-Loewner evolution (SLE) \cite{ChDuHoKeSm_Ising}. The spin-spin correlations were also studied \cite{ChHoIz_SpinCorrIs}. This note belongs to this effort, and studies the probability of crossing events (see definition below).

\subsection{Main results}
Crossing events are macroscopic observables describing the connectivity properties
of a random configuration. Formally, let $(\Omega,a,b,c,d)$ be a topological rectangle, i.e. a simply connected Jordan domain $\Omega$ with four points on its boundary, indexed in clockwise order. Note that $a$, $b$, $c$ and $d$ determine four arcs on the boundary denoted by $[ab]$, $[bc]$, $[cd]$ and $[da]$.
Let $a^\delta$, $b^\delta$, $c^\delta$ and $d^\delta$ be the vertices of $\partial\Omega^\delta$ closest to $a$, $b$, $c$ and $d$ respectively.
The rectangle $(\Omega^\delta,a^\delta,b^\delta,c^\delta,d^\delta)$ is {\em crossed} in an Ising configuration $\sigma$ if there exists a path of plusses going from $[a^\delta b^\delta]$ to $[c^\delta d^\delta]$, i.e. if there exists a sequence of vertices $v_0,\dots,v_n\in\Omega^\delta$ such that 
\begin{itemize}
\item $v_i$ and $v_{i+1}$ are neighbors for any $0\le i<n$,
\item $v_0\in[a^\delta b^\delta]$ and $v_n\in[c^\delta d^\delta]$,
\item $\sigma_{v_i}=+$ for any $0\le i\le n$.\end{itemize}
We denote such a crossing event by $\left[a^\delta b^\delta\right]{\leftrightsquigarrow}\left[c^\delta d^\delta\right]$, and call its probability a {\em crossing probability}.

Let us define a slight variation of the previous event. Two vertices $x$ and $y$ of $\Omega^\delta$ are called $\star$-neighbors if $\|x-y\|_\infty:=\max\{|x_1-y_1|,|x_2-y_2|\}=\delta$. With this definition, each vertex has eight neighbors instead of four. The rectangle $(\Omega^\delta,a^\delta,b^\delta,c^\delta,d^\delta)$ is {\em $\star$-crossed} if there exists a $\star$-path of plusses going from $[a^\delta b^\delta]$ to $[c^\delta d^\delta]$.
The event is denoted by $\left[a^\delta b^\delta\right]\overset{\star}{\leftrightsquigarrow}\left[c^\delta d^\delta\right]$.
\medbreak
The first theorem of this paper yields that crossing probabilities for the critical Ising model with free boundary conditions converge, when the mesh size tends to 0, to a conformally invariant limit.

\begin{thm}\label{thm:main}
There exists a function $f$ from the set of topological rectangles to $[0,1]$ such that
\begin{itemize}
\item for any topological rectangle $(\Omega,a,b,c,d)$,

\begin{eqnarray*}
\lim_{\delta\to0}\mu_{\Omega^{\delta},\beta_c}^{\mathrm{free}}\big( \left[a^\delta b^\delta\right]{\leftrightsquigarrow}\left[c^\delta d^\delta\right]\big) =\lim_{\delta\to0}\mu_{\Omega^{\delta},\beta_c}^{\mathrm{free}}\big( \left[a^\delta b^\delta\right]\overset{\star}{\leftrightsquigarrow}\left[c^\delta d^\delta\right]\big)=f(\Omega,a,b,c,d)
\end{eqnarray*}

\item $f$ only depends on the conformal type, i.e. for any topological rectangle $(\Omega,a,b,c,d)$ and any conformal map $\Phi:\Omega\rightarrow\mathbb C$, 

$$
f(\Phi(\Omega),\Phi(a),\Phi(b),\Phi(c),\Phi(d))=f(\Omega,a,b,c,d)
$$

\end{itemize}
\end{thm}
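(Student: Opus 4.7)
The plan is to (a) introduce an exploration process in the Ising model with free boundary conditions whose behavior encodes the crossing event, (b) prove its convergence in the scaling limit to $\mathrm{SLE}(3,-3/2,-3/2)$, and (c) read off the limiting crossing probability from that SLE, which gives a conformally invariant answer automatically.

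I would first construct a discrete exploration path $\gamma^\delta$ started at $a^\delta$ whose terminal behavior determines whether $[a^\delta b^\delta]\leftrightsquigarrow[c^\delta d^\delta]$ holds. A natural candidate is the outer boundary of the union of plus clusters touching $[a^\delta b^\delta]$: traced from $a^\delta$, this curve either reaches $[c^\delta d^\delta]$ (indicating a crossing) or terminates on $[b^\delta c^\delta]\cup[d^\delta a^\delta]$. To make the boundary conditions on either side of the exploration Markovian, one typically has to condition on or average over the boundary spin patterns, and with free boundary conditions this parallels the construction of the CLE exploration tree announced in the abstract.

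For the convergence $\gamma^\delta\to\mathrm{SLE}(3,-3/2,-3/2)$, I would follow the standard route of tightness plus identification of subsequential limits. Tightness in the curve topology modulo reparametrization should follow from Russo-Seymour-Welsh estimates at criticality, available for the Ising model via its FK representation and the fermionic observable machinery of Chelkak-Smirnov. For identification, the key ingredient is a discrete martingale observable adapted to free boundary conditions, in the spirit of the Chelkak-Smirnov fermionic observable but carrying the Riemann boundary data appropriate to the free condition; in conformal field theory terms this encodes a boundary spin field of weight $1/2$. Its continuum limit is an explicit holomorphic function whose composition with the inverse Loewner map is a local martingale precisely when the driving function is that of $\mathrm{SLE}(3,-3/2,-3/2)$, with the two force points of weight $-3/2$ sitting at the marked points flanking $a$ on $\partial\Omega$.

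Conformal invariance of the limiting crossing probability is then immediate from that of $\mathrm{SLE}(\kappa,\rho_L,\rho_R)$. The equivalence $\leftrightsquigarrow\Leftrightarrow\overset{\star}{\leftrightsquigarrow}$ in the limit is a standard by-product: a $\star$-crossing that is not a nearest-neighbor crossing would force a microscopic pinch point of the plus cluster, an event of vanishing probability in the continuum limit by continuity of the scaling-limit curve and planar duality. The main obstacle I anticipate is the identification step: designing the free-boundary Ising observable, analyzing its boundary behavior near the force points (where the Riemann data become singular) and extracting exactly the weight $-3/2$ rather than some other $\rho$ are the delicate points. Moreover, the presence of two force points of negative weight at the tip of $\mathrm{SLE}(\kappa,\rho,\rho)$ requires extra care, since the curve can touch these points and this complicates both the martingale identification and the tightness estimates.
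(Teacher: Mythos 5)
Your high-level architecture matches the paper exactly: construct an exploration process adapted to free boundary conditions, prove convergence to $\mathrm{SLE}(3,-3/2,-3/2)$, and read off crossing probabilities (conformal invariance then being automatic from the SLE description). However, there are two genuine gaps in your identification step relative to what the paper does.

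First, the paper does \emph{not} construct a new martingale observable for the fully-free boundary condition. Instead, it treats the convergence of interfaces with mixed $+$/free/$-$ boundary conditions to $\mathrm{SLE}(3,-3/2,-3/2)$ as a black box (this is \cite{HoKy_Isintfreebc}, which is where the fermionic observable with Riemann boundary data you describe actually lives), and then upgrades to the fully-free case via a spatial Markov argument: at any stopping time when the explorer is strictly inside the free arc, the slit domain presents exactly the $+$/free/$-$ boundary data and one can invoke the known result to read off the Loewner SDE away from the force points. What makes the fully-free case genuinely harder --- and what your sketch does not address --- is that both force points start degenerate at $0^\pm$, so the SDE alone does not determine the process: the paper has to verify the Miller--Sheffield characterization (the properties P1--P4, in particular instantaneous reflection off the force points and H\"older regularity of the driving function) to pin down which solution of the SDE the scaling limit is. A martingale observable constructed from scratch would still face this identification subtlety at the force points.

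Second, your ``pinch-point'' argument for $\leftrightsquigarrow\Leftrightarrow\overset{\star}{\leftrightsquigarrow}$ is informal where the paper has a clean structural device: the leftmost explorer hitting $[cd]$ before $[bc]$ is equivalent to a $\star$-crossing, the rightmost explorer hitting $[cd]$ before $[bc]$ is equivalent to a nearest-neighbor crossing, and the proof that both explorers (and indeed any exploration between them) converge to the same CDE is what yields the equality of the two limiting probabilities. Proving that the leftmost and rightmost explorers coalesce in the scaling limit is itself non-trivial --- the paper needs a ``hairy excursion'' crossing argument to show that the set of edges shared by both explorers is dense on the trace. Your proposal would also need some version of that lemma; a soft appeal to ``continuity of the scaling-limit curve'' does not by itself rule out macroscopic discrepancies between the two discrete explorers.
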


 The first property guarantees that connectivity and $\star$-connectivity are the same in the scaling limit. Let us also mention that the result should extend to isoradial graphs, thus proving some sort of universality (see \cite{SmiChe_isoradial} for a precise definition of isoradial graphs).

Crossing probabilities played a central role in the study of planar lattice models at criticality. The work of Langlands, Pouliot and Saint-Aubin \cite{LPS},
who verified numerically Cardy's formula \cite{Ca} for percolation crossing
probabilities, constitutes one of the first direct evidences of
full conformal invariance of lattice models. This formula
was then related non-rigorously by Schramm to SLE \cite{Law} and proved rigorously by Smirnov \cite{Smi_ICM} for critical site percolation on the triangular lattice. Let us stress out that this result was the crucial step towards the proof of conformal invariance of percolation interfaces.

In the case of the Ising model, crossing probabilities with free boundary conditions
were also investigated numerically by Langlands, Lewis and Saint-Aubin \cite{LaLeSA_UnivIs}.
They concluded to the conformal invariance and the universality of these probabilities. 

Unlike in the percolation case, no prediction for the limiting value of crossing probabilities is currently available for the Ising model with free boundary conditions. Indeed, the simplest generalization of Cardy's formula in Conformal Field Theory deals with crossing probabilities in topological rectangles $(\Omega^\delta,a^\delta,b^\delta,c^\delta,d^\delta)$ with spins fixed to be $+$ on $[a^\delta b^\delta]$ and $[c^\delta d^\delta]$, and $-$ on $[b^\delta c^\delta]$ and $[d^\delta a^\delta]$, and was studied in \cite{Iz_PhD}. 

While Theorem~\ref{thm:main} does not give an explicit formula, the proof provides some information on the limiting probabilities.
Indeed, the crossing probabilities for the Ising model with free
boundary conditions can be represented as hitting probabilities for
a discrete exploration process, which in words is the leftmost interface between $+$ and $-$, bouncing off the boundary in such a way that it can always reach $[b^\delta c^\delta]$ without crossing itself. Let us define it formally.

Consider a configuration $\sigma\in\{-1,1\}^{\Omega^\delta}$, and two vertices $u$ and $v$ of $\partial(\Omega^\delta)^*$. An exploration process $\gamma^\delta:\{0,\dots,n\}\rightarrow(\Omega^\delta)^*$ from $u$ to $v$ is a path such that :
\begin{itemize} 
\item $\gamma^\delta_0=u$, $\gamma^\delta_n=v$, and $\gamma^\delta_i\sim\gamma^\delta_{i+1}$ for any $0\le i \le n$.
\item $\gamma^\delta$ is non self-crossing. In particular, the vertex $\gamma^\delta_{j+1}$ is always in the connected component of $(\Omega^\delta)^*\setminus\gamma^\delta[0,j]$ containing $v$.
\item The vertex to the left of the oriented edge $(\gamma^\delta_j,\gamma^\delta_{j+1})$ is either outside of $\Omega^\delta$ or carries a $+$ spin.
\item The vertex to the right of the oriented edge $(\gamma^\delta_j,\gamma^\delta_{j+1})$ is either outside of $\Omega^\delta$ or carries a $-$ spin.
\end{itemize}

\begin{rem}
Note that we could have chosen our exploration paths to let $+$ spins on their right instead. Such an exploration path from $a$ to $b$ would give, by time-reversal, an exploration path from $b$ to $a$ letting $+$ spins on its left.
\end{rem}

Let us also describe a specific choice of exploration that we will consider.

\begin{defn}We denote by $\gamma^{\ell,\delta}=\gamma^{\ell,\delta}_{u,v}$ the {\em leftmost free explorer} from $u$ to $v$, i.e. the leftmost of all exploration paths with these endpoints. Such a path satisfies the property that, given $\gamma^{\ell,\delta}[0,j]$, the following step $\gamma^{\ell,\delta}_{j+1}$ is always the counterclockwisemost admissible step for an exploration process.
\end{defn}

By construction, the leftmost explorer is a non self-crossing curve drawn on the dual lattice starting from $u$ and ending at $v$. On the boundary, it turns in such a way that it can always end at $v$ without ever crossing itself.  Inside the domain, it is an interface between a path of $+$ spins and a $\star$-path of $-$ spins, or equivalently it is an exploration that turns left whenever there is an ambiguity.

Similarly, one may construct the {\em rightmost free explorer} $\gamma^{r,\delta}_{u,v}$ by letting our explorer always make the clockwisemost admissible turn. Note that any exploration process is sandwiched between $\gamma^{\ell,\delta}$ and $\gamma^{r,\delta}$.

\medbreak
Define the {\em Continuous Dipolar Explorer} (CDE) in $\mathbb H$ from 0 to $\infty$ to be the SLE$(3,-\tfrac32,-\tfrac32)$ process with driving points $0^-$ and $0^+$ (see next section for the definition of SLE$(\kappa,\rho_1,\rho_2)$). The CDE in a domain $\Omega$ from a boundary point $u$ to another boundary point $v$ is then the image of the CDE in $\mathbb H$ from 0 to $\infty$ under any conformal bijection from $(\mathbb{H},0,\infty)$ to $(\Omega,u,v)$.

\begin{thm}\label{thm:dde-to-cde-cv}
Let $\Omega$ be a simply connected Jordan domain, with two marked points $u$ and $v$ on its boundary. Consider the critical Ising model on $\Omega^\delta$ with free boundary conditions. Any family of exploration processes $(\gamma^{\delta}_{u^\delta,v^\delta})$ converges in law (as the mesh size $\delta$ goes to $0$) to the {\rm CDE} from $u$ to $v$ in $\Omega$. In particular, $(\gamma^{\ell,\delta}_{u^\delta,v^\delta})$ and $(\gamma^{r,\delta}_{u^\delta,v^\delta})$ converge to the same limit.
\end{thm}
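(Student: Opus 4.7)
I would prove Theorem \ref{thm:dde-to-cde-cv} by the now-standard martingale observable approach, pioneered by Smirnov and adapted to the Ising setting by Chelkak--Smirnov \cite{SmiChe_Ising,ChDuHoKeSm_Ising}, suitably modified to handle free boundary conditions and force points. The argument breaks into four ingredients: (i) construction of a discrete holomorphic observable that is a martingale for the exploration; (ii) convergence of this observable to a conformally covariant continuous object; (iii) tightness of the family of curves $(\gamma^\delta)$; and (iv) identification of the limit via the observable.

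For the observable, I would consider a variant of the fermionic (or spin) observable of Chelkak--Smirnov adapted to the free setting. On the slit dual domain $(\Omega^\delta)^* \setminus \gamma^\delta[0,j]$, the observable should satisfy a discrete Cauchy--Riemann equation in the bulk, a Riemann--Hilbert boundary condition of the Dobrushin type along the explored interface (where $+$ spins sit on one side and $-$ on the other), and a different boundary condition along the unexplored free arcs of $\partial\Omega^\delta$ encoding the free boundary (a Neumann-type condition at the level of the associated harmonic function). The martingale property then follows from the usual domain-Markov argument. Passing to the scaling limit, standard discrete complex-analytic techniques should give convergence, uniform on compact subsets of the slit domain, to a continuous conformally covariant observable obtained as the solution of the corresponding mixed Riemann--Hilbert problem.

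For tightness, I would appeal to RSW-type crossing estimates for the critical Ising model under arbitrary boundary conditions (available through \cite{ChDuHoKeSm_Ising} and subsequent work), combined with the Kemppainen--Smirnov criterion, to obtain tightness of $(\gamma^\delta)$ in the topology of curves modulo reparameterization. For identification, given any subsequential limit, parameterize the curve by chordal Loewner's theorem in a reference half-plane $(\mathbb H,0,\infty)$, with driving function $W_t$. Ito calculus applied to the continuous observable evaluated at the tip of the growing slit, combined with its martingale property under the limiting dynamics, should produce an SDE
\[
dW_t = \sqrt{3}\, dB_t + \frac{-3/2}{W_t - V_t^-}\, dt + \frac{-3/2}{W_t - V_t^+}\, dt,
\]
where $V_t^\pm$ are the images of the force points $0^\pm$ under the Loewner flow. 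This is exactly the defining SDE of SLE$(3,-\tfrac32,-\tfrac32)$, i.e.\ the CDE. The coefficient $\sqrt{3}$ comes from the Ising universality class (as in the Dobrushin case); the two drifts with weights $-3/2$ should be read off the local expansion of the continuous observable near the starting point $u$, where both force points are located.

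Since this argument applies uniformly to both $\gamma^{\ell,\delta}$ and $\gamma^{r,\delta}$, both extremal explorers converge to the same limit CDE. Any other exploration process is sandwiched (as a planar curve) between these two extremal ones, and so must converge to the same limit as well (in the appropriate curve topology). The main obstacle is the analytic part of steps (i)--(ii): designing an observable whose boundary behavior simultaneously encodes the fixed Dobrushin-type interface and the free arc, and solving the associated mixed Riemann--Hilbert problem. Free boundary conditions are genuinely more subtle than Dobrushin ones, since the mixed problem requires careful treatment near the junction between explored and free portions of $\partial\Omega^\delta$; this is also what should ultimately produce the two force points of weight $-3/2$ in the limit rather than, say, a boundary term of SLE$_3$ type.
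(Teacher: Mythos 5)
Your proposal re-derives from scratch the convergence to SLE$(3,-\tfrac32,-\tfrac32)$ via a fermionic observable with mixed Dobrushin/free boundary behavior, whereas the paper treats exactly that difficult analytic input as a black box: it is Theorem~\ref{thm:ci}, due to Hongler--Kyt\"ol\"a \cite{HoKy_Isintfreebc}, stating that the leftmost explorer in a $+/\text{free}/-$ domain converges to SLE$(3,-\tfrac32,-\tfrac32)$ \emph{until its first hit of the free arc}. The paper's actual contribution in this proof is to upgrade that local-in-time statement to a global one and to all explorers. It does so by verifying that any subsequential limit produces a triplet $(U_t,O_t^L,O_t^R)$ satisfying the Miller--Sheffield properties P1--P4' that characterize SLE$(\kappa,\rho_1,\rho_2)$ for all time, including at times when $U_t$ meets a force point. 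Your proposal stops at ``this is exactly the defining SDE'' but does not address what happens at those collision times, which is precisely where the SDE degenerates and where the characterization theorem does the real work. Without invoking something like P1--P4' (or an equivalent pathwise-uniqueness statement after collisions), your Ito argument only identifies the process on the set of times where $W_t$ is strictly between the images of the force points; this does not pin down a unique law for all time, and the process will in fact spend a measure-zero but topologically dense set of times at the force points.

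A second gap is your last paragraph. Convergence of the leftmost and rightmost explorers to the same limit and the sandwich inequality only yield convergence of \emph{traces} for intermediate explorations; it does not by itself give convergence as parametrized curves, since an intermediate exploration could a priori backtrack macroscopically and fail to be simple in the limit. The paper needs an extra argument (Lemma~\ref{lem:hairy}) showing that the set of edges used by \emph{both} extremal explorers is dense on the limiting trace; such common edges are points of no-return for any exploration, which rules out macroscopic backtracking and forces all intermediate explorations to converge to the same simple curve. If you want the sandwich argument to go through, you need to supply this additional ingredient (or something playing the same role).

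Finally, one smaller omission: the paper also needs Lemma~\ref{lem:boundary_touch}, showing that subsequential limits touch the boundary only at limits of discrete boundary-hitting times, in order to make sense of $L_t$ and $R_t$ (hence $O_t^L,O_t^R$) in the scaling limit. Your proposal does not say how the force-point processes $V_t^\pm$ are recovered from the subsequential limit of the discrete curve; without identifying them as the images of the extreme boundary points touched by the curve, the Ito computation has no well-defined objects to apply to.
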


In the previous theorem, the topology on curves used in the definition of the convergence in law is the supremum norm up to time reparametrization. Specifically, for a simply-connected Jordan domain $\Omega$ with two marked boundary points $u$ and $v$, we work with the set $\mathcal{C}(\overline{\Omega})$ of continuous curves $\gamma : [0,1] \to \overline\Omega$ considered up to time reparametrization. The distance between two such curves $\gamma_1$ and $\gamma_2$ is then defined as
$$
d(\gamma_1,\gamma_2) := \inf_{\varphi} \sup_{t\in [0,1]}|\gamma_1(t) - \gamma_2(\varphi_t)|,
$$
where the infimum runs over all increasing (bicontinuous) bijections $\varphi$ of $[0,1]$ onto itself. A similar definition makes sense if curves are parametrized by $\mathbb{R}^+\cup\{\infty\}$ instead of $[0,1]$.

The proof of Theorem~\ref{thm:dde-to-cde-cv} relies on recent results on Ising model \cite{ChDuHo_CrosProbIs,HoKy_Isintfreebc} and on ideas found in \cite{Sh_ExplTreesCLE,MilSheIG1}. 

We now state a corollary of Theorem~\ref{thm:dde-to-cde-cv} that itself implies Theorem~\ref{thm:main}.

\begin{cor}\label{cor:1}
For any topological rectangle $(\Omega,a,b,c,d)$, the crossing probability is given by
\begin{equation*}f(\Omega,a,b,c,d):=\mathbb P\Big({\rm CDE}\mbox{ hits }\left[cd\right]\mbox{ before }\left[bc\right]\Big),\end{equation*}
where the \emph{CDE} goes from the point $a$ to an arbitrary point $v\in[bc]$.
\end{cor}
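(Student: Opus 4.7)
The plan is to express the discrete crossing probability as a hitting event for the leftmost or rightmost free explorer, and then pass to the scaling limit by invoking Theorem~\ref{thm:dde-to-cde-cv}. Fix any $v\in[bc]$ and let $u^\delta$, $v^\delta$ denote the dual vertices closest to $a$ and $v$. By planarity, the leftmost free explorer $\gamma^{\ell,\delta}_{u^\delta,v^\delta}$ traces one side of the outer boundary of the $+$-cluster attached to $[a^\delta b^\delta]$, up to its first visit to $[b^\delta c^\delta]\cup[c^\delta d^\delta]$. Consequently, the $+$-crossing event $[a^\delta b^\delta]\leftrightsquigarrow[c^\delta d^\delta]$ occurs if and only if $\gamma^{\ell,\delta}$ hits $[c^\delta d^\delta]$ before $[b^\delta c^\delta]$. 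The analogous identity for the $\star$-crossing event uses the rightmost explorer $\gamma^{r,\delta}$, which encodes $+$-$\star$-connectivity through the dual (FK) picture.

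By Theorem~\ref{thm:dde-to-cde-cv}, both $\gamma^{\ell,\delta}$ and $\gamma^{r,\delta}$ converge in law, in the topology of curves modulo reparametrization, to the same limit: the CDE from $a$ to $v$ in $\Omega$. To transfer the hitting event through this convergence, one needs the functional ``hits $[cd]$ before $[bc]$'' to be continuous at a full CDE-measure set of curves. This uses the regularity of SLE$(3;-\tfrac32,-\tfrac32)$: since $\kappa=3<4$ the curve is almost surely simple, and the law of its first hit on each of the arcs $[cd]$, $[bc]$ has no atom at the endpoints. The continuous mapping theorem then yields
$$\lim_{\delta\to 0}\mu^{\mathrm{free}}_{\Omega^\delta,\beta_c}\bigl([a^\delta b^\delta]\leftrightsquigarrow[c^\delta d^\delta]\bigr) = \mathbb P\bigl(\mathrm{CDE}\text{ hits }[cd]\text{ before }[bc]\bigr),$$
and the identical limit for the $\star$-crossing. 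Independence from the arbitrary choice of $v\in[bc]$ is automatic, since the discrete left-hand side does not involve $v$; conformal invariance of $f$ is inherited from the conformal covariance of SLE$(3;-\tfrac32,-\tfrac32)$ that is already built into the definition of the CDE.

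The main obstacle is the boundary-regularity input required for the continuous mapping step: one must rule out, with full CDE-probability, pathological behavior of the limiting curve near $[cd]$ and $[bc]$ (tangential grazing, or accumulation at the corners $b$, $c$, $d$) that would destroy continuity of the hitting functional. This is handled by standard boundary estimates for SLE$(\kappa;\rho)$ at the specific values $\kappa=3$, $\rho_1=\rho_2=-\tfrac32$. The matching of the two discrete limits then immediately implies Theorem~\ref{thm:main}.
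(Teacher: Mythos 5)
Your overall strategy coincides with the paper's: rewrite the discrete crossing and $\star$-crossing probabilities as the probability that the rightmost (resp.\ leftmost) free explorer from near $a$ to near $v\in[bc]$ hits $[cd]$ before $[bc]$, then pass to the limit via Theorem~\ref{thm:dde-to-cde-cv}, and note that the choice of $v$ is immaterial (this is Proposition~\ref{prop:dipolar}). A minor bookkeeping point: the paper's display pairs $\gamma^{\ell,\delta}$ with the $\star$-crossing and $\gamma^{r,\delta}$ with the ordinary crossing, while you make the opposite identification. Since the leftmost explorer is the interface with a simple $+$-path on its left and a $\star$-path of $-$ on its right, your pairing appears to be the correct one and the paper's two lines look interchanged; either way, the two discrete probabilities are shown to share the same limit, so this has no bearing on the statement.

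The substantive issue is your passage to the limit. You invoke the continuous mapping theorem, asserting that ``hits $[cd]$ before $[bc]$'' is continuous at CDE-a.e.\ curve and that the needed boundary regularity ``is handled by standard boundary estimates for SLE$(\kappa;\rho)$.'' This does not work as stated: the event is \emph{not} a continuity set for the CDE law on $\mathcal{C}(\overline\Omega)$. The set of boundary-touching times of the CDE has Hausdorff dimension strictly less than one, so almost surely a curve in the event can be pushed infinitesimally inward near its touches of $[cd]$ (keeping endpoints fixed and staying in $\overline\Omega$) to produce a nearby curve that misses $[cd]$ altogether; thus essentially the whole event lies on its own topological boundary, and Portmanteau/continuous mapping fails. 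No amount of continuum-only SLE regularity can repair this, because the missing ingredient concerns the \emph{coupling} between the discrete curve and its limit, not the limit alone: one needs to know that, under an almost-sure coupling $\gamma^\delta\to\gamma$, the discrete explorer touches $\partial\Omega^\delta$ (and therefore the right discrete arc) near every boundary-touching time of the limit. This is exactly Lemma~\ref{lem:boundary_touch} in the paper, and its proof is genuinely discrete, resting on the RSW-type crossing bound of Theorem~\ref{thm:cp}: thin tubes of $+$- and $-$-crossings on either side trap the discrete explorer and force it to the boundary whenever it comes close. Your proposal identifies the phenomenon to worry about (``tangential grazing'', corners) but misattributes its resolution to SLE boundary estimates; the discrete crossing estimate behind Lemma~\ref{lem:boundary_touch} is the essential input, and it is missing from your argument. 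The no-atom-at-corners point you raise is real but secondary.
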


\subsection{Exploration tree}

Discrete explorers allow one to build an exploration 'tree' analogous to the process defined by Sheffield \cite{Sh_ExplTreesCLE}. This object describes the exploration 'arcs' touching the boundary
of a domain, and is defined as follows. For any couple of boundary points $u,v \in \partial(\Omega^\delta)^*$, consider the set $\Gamma_{u,v}\subset \mathcal{C}\left(\overline{\Omega}\right)$ of all exploration paths (leaving $+$ spins on their left) from $u$ to $v$. Define the discrete free arc ensemble by 

$$
\mathcal{A}^\delta := \bigcup_{u,v\in \partial(\Omega^\delta)} \{u\}\times\{v\}\times\Gamma_{u,v} \subset \overline{\Omega}\times\overline{\Omega}\times\mathcal{C}\left(\overline{\Omega}\right)
$$

The last theorem of this article deals with the convergence of this discrete free arc ensemble towards the Free Arc Ensemble (FAE) the definition thereof we postpone to Section \ref{sec:4}.

\begin{thm}\label{thm:free-arc-cv}
Let $\Omega$ be a simply connected Jordan domain. Consider the critical Ising model on $\Omega^\delta$ with free boundary conditions.
The family $(\mathcal{A}^{\delta})_{\delta>0}$ converges in law to the {\rm FAE}, as the mesh size $\delta$ goes to $0$.
\end{thm}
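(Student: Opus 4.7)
The plan is to view $\mathcal{A}^\delta$ as a random closed subset of the compact metric space $\overline{\Omega}\times\overline{\Omega}\times\mathcal{C}(\overline{\Omega})$ (the last factor being endowed with the reparametrization-invariant sup metric $d$ described above, so $\mathcal{C}(\overline{\Omega})$ itself is compact by Arzelà--Ascoli modulo reparametrization). Tightness for the Hausdorff topology on closed subsets is then automatic, and the task reduces to identifying every subsequential limit as the FAE defined in Section \ref{sec:4}.

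First I would fix a countable dense set $D\subset\partial\Omega$ and, by diagonal extraction, pass to a subsequence along which the family of discrete leftmost explorers $(\gamma^{\ell,\delta}_{u^\delta,v^\delta})_{u,v\in D}$ converges jointly in law. For each fixed $(u,v)\in D\times D$, Theorem \ref{thm:dde-to-cde-cv} identifies the marginal limit as the CDE from $u$ to $v$, and states moreover that the rightmost explorer $\gamma^{r,\delta}_{u^\delta,v^\delta}$ converges to the same limit. By the sandwich property (any exploration path is squeezed between $\gamma^{\ell,\delta}$ and $\gamma^{r,\delta}$), this forces \emph{every} entry of $\mathcal{A}^\delta$ over endpoints close to $(u,v)$ to concentrate on this single CDE curve. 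Hence the fibres of the limit ensemble over $D\times D$ are precisely the CDE arcs, and the consistency between different pairs of endpoints is inherited from the target-invariance of $\mathrm{SLE}(3,-\tfrac{3}{2},-\tfrac{3}{2})$ together with the domain Markov property of free-boundary Ising: conditioning on an initial segment of a leftmost explorer leaves a free-boundary Ising model on the complementary domain, whose leftmost explorer towards any remaining target is again a CDE. This yields the analogue of Sheffield's branching rules \cite{Sh_ExplTreesCLE} in the scaling limit.

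Finally, continuity in the endpoints has to be transferred from $D\times D$ to all of $\partial\Omega\times\partial\Omega$. On the discrete side, changing the starting point $u$ to a nearby $u'\in\partial(\Omega^\delta)^*$ affects $\gamma^{\ell,\delta}_{u,v}$ only until the explorer has left a neighborhood of the starting segment; using the RSW-type crossing estimates for critical Ising with free boundary from \cite{ChDuHo_CrosProbIs,HoKy_Isintfreebc}, this initial piece is macroscopically small with high probability, uniformly in $\delta$. The same bound gives a uniform modulus of continuity for the map $(u,v)\mapsto \gamma^{\ell,\delta}_{u,v}$. Passing to the limit yields a continuous dependence of the limiting CDE on its endpoints, so the limit ensemble is fully determined by its restriction to $D\times D$ and coincides with the FAE.

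The main obstacle I expect is precisely this last uniform continuity-in-endpoints step: Theorem \ref{thm:dde-to-cde-cv} only supplies convergence for one fixed pair of endpoints at a time, and upgrading to the joint compactness needed to take a Hausdorff-continuous limit of all arcs simultaneously requires quantitative control, uniform in $\delta$, on how fast an exploration escapes the boundary arc on which it has just started. The natural tool is the mixed boundary RSW estimate for critical Ising, and the delicate point is to apply it in the presence of a free arc meeting a previously drawn interface, i.e.\ within the inductive step that constructs successive branches of the exploration tree.
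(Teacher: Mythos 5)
Your opening claim---that tightness of $(\mathcal{A}^\delta)$ is automatic because $\mathcal{C}(\overline{\Omega})$ is ``compact by Arzel\`a--Ascoli modulo reparametrization''---is false, and it is the central gap in the proposal. The reparametrization-invariant (Fr\'echet-type) metric $d$ does \emph{not} make $\mathcal{C}(\overline{\Omega})$ compact: Arzel\`a--Ascoli requires a uniform modulus of continuity, and reparametrization invariance supplies none. For a concrete obstruction, take curves that run back and forth $n$ times across a fixed chord of $\overline\Omega$; no increasing bijection of $[0,1]$ changes the number of crossings of a mid-level set, so these curves stay at pairwise $d$-distance bounded below and admit no convergent subsequence. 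Hence $\mathcal{H}\bigl(\overline{\Omega}\times\overline{\Omega}\times\mathcal{C}(\overline{\Omega})\bigr)$ is not compact and tightness of $(\mathcal{A}^\delta)$ genuinely requires proof. In the paper this is the tightness lemma in Section~\ref{sec:4}: one chooses a finite $\eta$-net $\{u_1,\dots,u_N\}\subset\partial\Omega$, uses the crossing estimate (Theorem~\ref{thm:cp}) and Lemma~\ref{lem:boundary_touch} to confine excursions away from the boundary, and then invokes the ``hairy'' Lemma~\ref{lem:hairy} to conclude that, with high probability, \emph{no} exploration between boundary points leaves the $\varepsilon$-neighborhood of the finitely many explorations with endpoints in $\{u_1,\dots,u_N\}$. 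Your sketch does not engage with Lemma~\ref{lem:hairy}, and it is exactly this lemma that rules out the oscillatory behaviour that kills compactness.

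The identification step in your proposal also takes a harder road than the paper. You aim to transfer convergence from a dense set $D\times D$ to all endpoints by proving a uniform-in-$\delta$ modulus of continuity for $(u,v)\mapsto\gamma^{\ell,\delta}_{u,v}$. But the limiting ensemble has countably many exceptional pairs $(u,v)$ for which $\gamma_{u,v}$ is a multi-curve (two simple paths, or two loops together with a point), so any honest ``continuity in endpoints'' statement must be set-valued and is delicate at precisely those pairs; your writeup does not address this. The paper sidesteps the issue entirely with a closure argument: Proposition~\ref{prop:FAEclose} shows that ${\rm FAE}'$ is the closure of the countable family of paths with endpoints in $\mathcal{P}$; any subsequential limit $\mathcal{A}$ of $\mathcal{A}^\delta$ is closed and contains the paths indexed by $\mathcal{P}$ with the correct joint law (Theorems~\ref{thm:dde-to-cde-cv} and~\ref{thm:ci} plus the domain Markov property), so ${\rm FAE}'\subset\mathcal{A}$, while the same tightness argument shows that $\mathcal{A}$ stays inside the closure of that countable family, giving $\mathcal{A}\subset{\rm FAE}'$. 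This reduces everything to finitely many explorers and removes the need for any endpoint-continuity estimate.
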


The topology used in the convergence in law result will be given in Section \ref{sec:4}.

\begin{rem}The convergence in Theorem \ref{thm:free-arc-cv} is robust: the arguments we give hold for any family of discrete domains $\Omega^\delta$ that converges towards $\Omega$ in the sense of convergence of their boundaries (as curves, for the topology of uniform convergence up to reparametrization).
\end{rem}

\subsection{Organization of the paper}

The paper is organized as follows. In Section~\ref{sec:1}, we remind the definition of the Continuous Dipolar Explorer. We state two useful results on the Ising model in Section \ref{sec:bg}. In Section~\ref{sec:2}, we prove Theorem~\ref{thm:dde-to-cde-cv}. Section~\ref{sec:3} is devoted to the proof of Theorem~\ref{thm:main}. In Section~\ref{sec:4} we define the {\em Free Arc Ensemble} and we discuss how to deduce Theorem~\ref{thm:free-arc-cv} from Theorem~\ref{thm:dde-to-cde-cv}.

\section{Definition of Schramm-Loewner evolutions driven by several points}\label{sec:1}

\subsection{Loewner chains}

Loewner chains allow one to encode a growing compact set in the upper half-plane $\mathbb H:=\{z\in\mathbb{C}:\Im{\rm m}(z)>0\}$ by one real-valued function. In particular, one can encode in this way simple curves between two boundary points of $\mathbb H$. We refer to \cite{Law} for a book on this subject.

Let $\left(\gamma_s\right)_{s\geq0}$ be a continuous curve in $\overline{\mathbb{H}}$
such that $\gamma_0=0$ and $\gamma_s\rightarrow \infty$ as $s\rightarrow\infty$. Let $H_{s}$ be the unbounded connected component
of $\mathbb{H}\setminus\gamma\left[0,s\right]$. Consider the conformal bijection $g_{s}:H_{s}\to\mathbb{H}$ normalized in such a way that 
$$g_{s}\left(z\right)=z+\frac{2a_{s}}{z}+o\left(\frac{1}{z}\right)$$ as $z\to\infty$. In the situations we will consider, the half-plane capacity ($h$-capacity) $a_s$ is a continuous increasing bijection of $\mathbb{R}^+$, and we can then reparametrize by $t = a_s$. For simplicity, we will reserve the notation $t$ for parametrizations such that $t$ is the $h$-capacity of the hull at time $t$.

The sequence of functions $\left(g_{t}\right)_{t\geq0}$ is then solution to the
Loewner flow equation
\begin{eqnarray*}
\partial_{t}g_{t}\left(z\right) & = & \frac{2}{g_{t}\left(z\right)-U_{t}}\\
g_{0}\left(z\right) & = & z,
\end{eqnarray*}
where $U_{t}=g_{t}\left(\gamma_{t}\right)\in\mathbb{R}$ is the so-called
{\em driving function} of the Loewner chain.

Conversely, any real-valued function $\left(U_{t}\right)_{t\geq0}$
defines, when solving the Loewner flow equation above, a Loewner chain $\left(g_{t}:H_{t}\to\mathbb{H}\right)_{t\geq0}$, where $H_t$ can be recovered as the set of initial conditions $z\in \mathbb H$ for which the differential equation does not blow up before time $t$.
If $\left(U_{t}\right)_{t\geq0}$ is regular enough (see \cite[Chapter 4.4]{Law} for a precise statement), there exists
a curve $\left(\gamma_t\right)_{t\geq0}$ such that $H_{t}$ is
the unbounded component of $\mathbb{H}\setminus\gamma\left[0,t\right]$ (and we call $K_t = \mathbb{H}\setminus H_t$ the hull generated by $\gamma[0,t]$). In such case, the Loewner chain $\left(g_{t}\right)_{t\geq0}$ is
said to be {\em generated by the curve} $\left(\gamma_t\right)_{t\geq0}$, and $(U_t)_{t\ge0}$ gives rise to a parametrized curve in $\overline{\mathbb H}$ from 0 to $\infty$.

\subsection{Loewner chains driven by a stochastic process} 

In \cite{S0}, Schramm suggested to build random curves by looking at Loewner chains generated by certain random driving functions. These random growing compact sets are called Schramm-Loewner evolutions. We now discuss three important examples. In the following the process $(B_t)$ is always a standard one-dimensional Brownian motion.

\subsubsection{SLE$(\kappa)$} Let $\kappa>0$. The chordal SLE$\left(\kappa\right)$ in $\mathbb H$ from 0 to $\infty$ is the Loewner chain driven by $U_{t}:=\sqrt{\kappa}B_{t}$.

\subsubsection{SLE$(\kappa,\rho)$}\label{subsec:slekr} Let $\kappa>0$ and $\rho\in\mathbb R$. The SLE$(\kappa,\rho)$ in $\mathbb H$ starting from 0 with {\em force point} $x\ge 0$ and {\em observation point} $\infty$ is the Loewner chain driven by $(U_t)_{t\ge0}$, where
$\left(U_{t},O_{t}\right)_{t\geq0}$ is the solution to the stochastic
differential equation
\begin{eqnarray}\label{eq:sle}
\mathrm{d}U_{t} & = & \sqrt{\kappa}\mathrm{d}B_{t}+\frac{\rho}{U_{t}-O_{t}}\mathrm{d}t,\\
\label{eq:sle2}\mathrm{d}O_{t} & = & \frac{2}{O_{t}-U_{t}}\mathrm{d}t,
\end{eqnarray}
with initial conditions $U_0=0$ and $O_0=x$.

The SLE$\left(\kappa,\rho\right)$ is a priori well-defined
by the above equation
until the first time when $U_{t}=O_{t}$.
It is however sometimes possible, depending on the values of the parameters $(\kappa,\rho)$, to find a reasonable solution of this system of SDEs defined for all times. Let us provide additional details.

Suppose $(U_t,O_t)_{t\ge0}$ satisfies the system of SDEs \eqref{eq:sle}/\eqref{eq:sle2} and let $X^\kappa_t=O_t-U_t$. Note that the rescaled process $X_t=X^\kappa_t/\sqrt{\kappa}$ satisfies the Bessel stochastic differential equation of dimension $d=1+\frac{2(\rho+2)}{\kappa}$ :
\begin{equation}\label{eq:bessel}
\mathrm{d}X_{t} ~ = ~ -\mathrm{d}B_{t}+\frac{\rho+2}{\kappa X_t}\mathrm{d}t.
\end{equation}
If moreover $(U_t,O_t)_{t\ge0}$ comes from a Loewner chain, the geometry forces $X_t$ to be non-negative, and instantaneously reflected at $0$ (i.e. the set of times at which $X_t=0$ is of Lebesgue measure $0$). One can check that -- provided that the dimension $d$ is strictly positive -- there is a unique\footnote{Uniqueness in law of $(X_t)$ is enough for what we need, but pathwise uniqueness would hold as well.} non-negative process $(X_t)_{t\ge0}$, called the Bessel process of dimension $d$, which is  instantaneously reflected at $0$ and which evolve according to \eqref{eq:bessel} whenever this equation is non-singular. Indeed, \eqref{eq:bessel} characterizes the process of excursions of $(X_t)_{t\ge0}$ out of $0$. The only degree of freedom we could have is when glueing together the excursions to recover $(X_t)_{t\ge0}$. But there is at most one way to glue a given ordered set of excursions to get an instantaneous reflected process.
Now that we are in possession of $(X_t)_{t\ge0}$, let us explain how we recover $(U_t,O_t)_{t\ge0}$. When $d>1$ (i.e. $\rho> -2$) the integral
$$\int_0^t\frac{1}{X_s}ds$$
is almost surely finite for any $t\geq 0$. Let us define the process $O_t=\tfrac1{\sqrt\kappa}\int_0^t\frac{2}{X_s}ds$. The process $X_t$ can be seen to solve the Bessel equation \eqref{eq:bessel} in its integral form
\begin{equation}\label{eq:bessel-int}
X_{t} ~ = ~ -B_{t}+\int_0^t\frac{\rho+2}{\sqrt{\kappa} X_s}\mathrm{d}s~=~-B_t+(\rho+2)O_t.
\end{equation}
If we let $U_t = O_t - \sqrt\kappa X_t$, we get the unique solution $(U_t,O_t)_{t\ge 0}$ of the system of SDEs \eqref{eq:sle}/\eqref{eq:sle2} such that $O-U$ is non-negative and instantaneously reflected at $0$. Hence, when $\rho>-2$, we have a somehow unique notion of an SLE($\kappa,\rho$) defined for all time.

\begin{rem}\label{rem:regularity-bessel}
The Bessel process $X_t$ of dimension $d>1$, as Brownian motion, is $\alpha$-H\"older for any $\alpha<\frac{1}{2}$. The integral process $O_t$ inherits the same H\"older regularity by equation \eqref{eq:bessel-int}. Moreover, the Hausdorff dimension of the set of zeroes of $X_t$ is $\frac{2-d}{2}=\frac{1}{2}-\frac{\rho+2}{\kappa}$.
\end{rem}

\subsubsection{SLE($\kappa,\rho_1,\rho_2$)} Let $\kappa>0$ and $\rho_1,\rho_2\in\mathbb R$. The SLE$(\kappa,\rho_1,\rho_2)$ in $\mathbb H$ starting from 0 with force points $\ell\le 0$ and $r\ge0$ and observation point $\infty$ is the Loewner chain driven by $(U_t)_{t\ge0}$, where $(U_t,O_t^L,O_t^R)_{t\ge0}$
is the solution of the system of SDEs
\begin{eqnarray}
\mathrm{d}U_{t} & = & \sqrt{\kappa}\mathrm{d}B_{t}+\left(\frac{\rho_1}{U_{t}-O_{t}^{L}}+\frac{\rho_2}{U_{t}-O_{t}^{R}}\right)\mathrm{d}t,\label{eq:sle-krr-du}\\
\mathrm{d}O_{t}^{L} & = & \frac{2}{O_{t}^{L}-U_{t}}\mathrm{d}t,\label{eq:sle-krr-dol}\\
\mathrm{d}O_{t}^{R} & = & \frac{2}{O_{t}^{R}-U_{t}}\mathrm{d}t,\label{eq:sle-krr-dor}
\end{eqnarray}
and initial conditions $U_0=0$, $O_0^L=\ell$ and $O_0^R=r$. As before, the solution is a priori defined only for times when $O_{t}^{L}<U_{t}<O_{t}^{R}$. Nevertheless, if $\rho_1,\rho_2>-2$, Miller and Sheffield \cite[Section 2.2]{MilSheIG1} showed that there is a unique reasonable solution to this system of SDEs, solution which is defined for all time.

More precisely, there is a unique (in law) triplet of processes satisfying the following properties:
\begin{itemize}
\item[P1] $(U_t,O_t^L,O_t^R)_{t\ge0}$ satisfies the three equations above on the set of times $t$ for which $O^L_t<U_t<O^R_t$, $O^L_t<U_t$ and $U_t<O^R_t$ respectively.
\item[P2] $O^L_t\le U_t\le O^R_t$ for any $t\ge 0$.
\item[P3] The process $U_t$ is instantaneously reflected off the force points, in the sense that the set of times $t$ for which $U_t=O^L_t$ or $U_t=O^R_t$ is of zero Lebesgue measure.
\item[P4] The two equations \eqref{eq:sle-krr-dol} and \eqref{eq:sle-krr-dor} hold in their integral forms, namely

$$
O_t^L=\int_0^t\frac{2}{O_{s}^{L}-U_{s}}\mathrm{d}s  \ \ \ \ \ \ \ O_t^R=\int_0^t\frac{2}{O_{s}^{R}-U_{s}}\mathrm{d}s  \ \ \ .
$$
\end{itemize}

\begin{lem}Under the assumption that the three properties {\rm P1--3} hold, the last property {\rm P4} is equivalent to
\begin{itemize}
\item[{\rm P4'}] The process $(U_t)_{t\ge0}$ is $\alpha$-H\"older continuous for every $\alpha<1/2$.
\end{itemize}
\end{lem}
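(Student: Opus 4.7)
For (P4 $\Rightarrow$ P4'), the plan is to substitute P4 into the integrated form of \eqref{eq:sle-krr-du}. Since P4 yields $\int_0^t (U_s - O^L_s)^{-1}\,ds = -\tfrac{1}{2} O^L_t$ and likewise $\int_0^t (U_s - O^R_s)^{-1}\,ds = -\tfrac{1}{2} O^R_t$, this gives the affine identity
\[
U_t = \sqrt\kappa\, B_t - \tfrac{\rho_1}{2} O^L_t - \tfrac{\rho_2}{2} O^R_t.
\]
Applying the same procedure to the auxiliary gap processes $X^L_t := U_t - O^L_t \ge 0$ and $X^R_t := O^R_t - U_t \ge 0$ (whose It\^o expansions follow by subtracting \eqref{eq:sle-krr-dol} and \eqref{eq:sle-krr-dor} from \eqref{eq:sle-krr-du}) produces two further affine identities. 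These can be read as a $2\times 2$ linear system in $(O^L_t, O^R_t)$ with coefficients polynomial in $\rho_1,\rho_2$ and determinant $-(\rho_1 + \rho_2 + 2)/2$, which is nonzero in the regime considered (in particular for SLE$(3,-\tfrac32,-\tfrac32)$; the exceptional locus $\rho_1+\rho_2=-2$ can be handled separately by continuity). Inverting expresses $O^L_t, O^R_t$ as affine combinations of $B_t, X^L_t, X^R_t$. Since $B$ is Hölder $\tfrac{1}{2}^-$ and, under the assumption $\rho_1,\rho_2 > -2$, the gap processes $X^L, X^R$ are Bessel-like of dimension strictly greater than $1$, Remark \ref{rem:regularity-bessel} together with a comparison argument gives them the same regularity. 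The displayed identity then delivers the Hölder continuity of $U$.

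For (P4' $\Rightarrow$ P4), I fix $t>0$ and must verify $O^L_t = \int_0^t 2/(O^L_s - U_s)\,ds$ (and its analogue for $O^R$). By P1 and P3 this identity already holds when restricted to any connected component of the open set $\{X^L > 0\}$, which has full Lebesgue measure in $[0,t]$. What remains is to show (a) that $s \mapsto 1/X^L_s$ is Lebesgue-integrable on $[0,t]$, and (b) that $O^L$ contributes no singular part concentrated on the closed set $F := \{U = O^L\}$. For (a), the Hölder $\tfrac12^-$ regularity of $U$ together with the repulsive drift $(\rho_1+2)/X^L$ in the SDE for $X^L$ prevents $X^L$ from returning to $0$ too slowly; comparison with a Bessel process of dimension $1 + 2(\rho_1+2)/\kappa > 1$ yields the required integrability. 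For (b), any singular decrement of $O^L$ supported on $F$ would appear as a local-time-at-$0$ contribution to $X^L$, which the instantaneous reflection P3, combined with the vanishing of local time for Bessel-like processes of dimension $>1$ (cf.\ \ref{subsec:slekr}), excludes. The same reasoning treats $O^R$.

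The main obstacle is the reverse direction, specifically the rigorous exclusion of a singular part of $O^L$ or $O^R$ using \emph{only} the Hölder regularity of $U$: because the SDE for $X^L$ is not a pure Bessel SDE but carries an interaction term with $X^R$, the classical Bessel estimates are not directly applicable. I expect to have to bootstrap between the two sides, or to invoke a Skorokhod-type uniqueness statement for the coupled reflected system in the spirit of Miller and Sheffield \cite{MilSheIG1}.
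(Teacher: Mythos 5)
Your plan is in the right spirit --- integrate the SDEs for the gap processes and try to reduce to Bessel regularity --- but it is missing the one technical device that resolves the obstacle you flag at the end: the \emph{Girsanov theorem}. Near the set $\{U=O^L\}$ the other force point $O^R$ is necessarily bounded away from $U$, so the interaction term $\rho_2/(U_t-O^R_t)$ in \eqref{eq:sle-krr-du} is a bounded drift and can be absorbed into the Brownian motion by a local change of measure. Under the new measure $O^L-U$ is (up to scale) a genuine Bessel process of dimension $d=1+2(\rho_1+2)/\kappa>1$, and Remark~\ref{rem:regularity-bessel} applies directly; there is no need for a vague ``comparison argument,'' and the exceptional-locus case $\rho_1+\rho_2=-2$ disappears. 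This decoupling is exactly what both halves of your argument require and never obtain. (For ${\rm P4}\Rightarrow {\rm P4'}$, the paper in fact shortcuts even this by citing the Miller--Sheffield uniqueness theorem together with Remark~\ref{rem:regularity-bessel}; your inverted linear system is correct algebra, but it adds nothing once the Bessel regularity of the gap processes is in hand, and without Girsanov that regularity is exactly what remains unproved.)

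The second, more serious gap is in ${\rm P4'}\Rightarrow {\rm P4}$. Integrability of $1/X^L$ and a local-time heuristic do not exclude a singular (Cantor-staircase) decrement of $O^L$ supported on the closed set $F=\{U=O^L\}$, and the elimination of such a part is where P4' is actually used. The paper's mechanism is a H\"older/Hausdorff-dimension argument: set $I_t=O^L_t-\int_0^t\tfrac{2}{O^L_s-U_s}\,\mathrm{d}s$, which by P1 is locally constant off $F$. After the Girsanov reduction, $F$ is the zero set of a Bessel process of dimension $d>1$, hence has Hausdorff dimension $\tfrac12-\tfrac{\rho_1+2}{\kappa}<\tfrac12$; moreover $\int_0^t 2/X^L_s\,\mathrm{d}s$ is $\alpha$-H\"older for every $\alpha<1/2$ by Remark~\ref{rem:regularity-bessel}, and $O^L=U-X^L$ is $\alpha$-H\"older by P4' combined with the Bessel H\"older regularity, so $I$ is $\alpha$-H\"older for every $\alpha<1/2$. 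A continuous $\alpha$-H\"older function that is locally constant off a closed set of Hausdorff dimension strictly less than $\alpha$ has image of zero Lebesgue measure, hence is constant; choosing $\alpha$ between the Hausdorff dimension of $F$ and $1/2$ forces $I\equiv0$. No separate discussion of singular parts or Skorokhod uniqueness is needed.
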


\begin{proof}On the one hand, the property P4' holds for the unique solution of P1-4 as explained in Remark~\ref{rem:regularity-bessel}.

In order to prove that P4' implies P4, we can proceed as follow. Let us focus on the equation in P4. Issues may only arise at positive times when $O^L=U$. At these times, we necessarily have $O^R\neq U$. Hence, by the Girsanov theorem, the term $\frac{\rho_2}{U_{t}-O_{t}^{R}}$ of \eqref{eq:sle-krr-du} can be absorbed in the Brownian motion via a change of measure, and we can temporarily forget the point $O^R$. We are thus reduced to the setup of SLE$(\kappa,\rho)$ as in Section \ref{subsec:slekr}. The process $O^L-U$ under this change of measure is a Bessel process (up to multiplication by a constant).
Let $I_t =O_t^L-\int_0^t\frac{2}{O_{s}^{L}-U_{s}}\mathrm{d}s$.
Thanks to properties of the Bessel process (Remark \ref{rem:regularity-bessel}) and the property P4', we see that $I_t$ is an $\alpha$-H\"older process for any $\alpha<1/2$ that is constant outside of a set of Hausdorff dimension $\frac{1}{2}-\frac{\rho+2}{\kappa}<\frac{1}{2}$. Hence $I_t$ identically vanishes.
\end{proof}

\begin{rem}\label{rem:SLEcoord}
Let $(\Omega,s,l,r,o)$ be a simply-connected domain with four marked points on its boundary. The SLE$(\kappa,\rho)$ from $s$ with force point $l$ and observation point $r$ and the SLE$(\kappa,\rho, \kappa-6-\rho)$ from $s$ with force points $l$ and $r$ and with observation point $o$ have the same law until the first disconnection time of $r$ and $o$ (see e.g. \cite{SchWi_SLECoordChange}). In particular, the processes SLE$(3,\frac{-3}2)$ and SLE$(3,\frac{-3}2,\frac{-3}2)$ are related.
\end{rem}

\subsection{Definition of the CDE}

\begin{defn}
\label{def:two-point-cde}The CDE in $(\mathbb H,0,\infty)$ is the process SLE$(3,\frac{-3}2,\frac{-3}2)$ from 0 with force points $\ell=0^-$ and $r=0^+$ and observation point $\infty$ .  Let $\left(\Omega,a,b\right)$ be a simply
connected domain with two marked points $a,b\in\partial\Omega$. The CDE in $\left(\Omega,a,b\right)$ is the image under a conformal map from $\mathbb H$ onto $\Omega$ mapping 0 to $a$ and $\infty$ to $b$.\end{defn}

Let us recall a simple property of the CDE, namely that it is independent of the observation point.

\begin{prop}\label{prop:dipolar}
Consider a domain $\Omega$ with three marked boundary points $u$, $v_1$ and $v_2$. There exists a coupling of a {\rm CDE} $\gamma_1$ in $\left(\Omega,u,v_1\right)$ with a {\rm CDE} $\gamma_2$ in $\left(\Omega,u, v_2\right)$
such that the two curves coincide up to the first time when $v_1$ and $v_2$ get disconnected by the trace of the curve. \end{prop}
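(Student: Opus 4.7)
My plan is to invoke Remark~\ref{rem:SLEcoord} to realize both CDEs as initial segments of a single common auxiliary SLE$(3, -\tfrac32)$ process. Using conformal invariance I first normalize to $(\Omega, u, v_1) = (\mathbb{H}, 0, \infty)$ with $v_2 = p \in \mathbb{R}^\ast$; via the reflection $z \mapsto -\bar z$, under which the CDE is symmetric in law (both force points carrying the same weight $-\tfrac32$), I may further assume $p > 0$.

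The parameters of the CDE satisfy $\rho_2 = \kappa - 6 - \rho_1$, since $-\tfrac32 = 3 - 6 + \tfrac32$, so Remark~\ref{rem:SLEcoord} applies. Let $\tilde\gamma$ be an SLE$(3, -\tfrac32)$ from $0$ with force point $0^-$ and observation $0^+$, and write $\tau_q$ for the first time $\tilde\gamma$ disconnects $0^+$ from a boundary point $q$. The Remark then yields that the CDE in $(\mathbb{H}, 0, \infty)$ (resp.\ in $(\mathbb{H}, 0, p)$) has the same law, modulo reparametrization, as $\tilde\gamma$ stopped at $\tau_\infty$ (resp.\ at $\tau_p$). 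I construct the coupling by sampling a single realization of $\tilde\gamma$ and extending it past $\tau_\infty$ and $\tau_p$ to full CDE$_1$ and CDE$_2$ samples $\gamma_1, \gamma_2$ using the respective conditional laws of each CDE given its stopped initial segment. By construction, $\gamma_1$ and $\gamma_2$ coincide as curves in $\overline{\mathbb{H}}$ (up to reparametrization) on the interval $[0, \min(\tau_\infty, \tau_p)]$.

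To finish, I need to show $\min(\tau_\infty, \tau_p) \geq \tau_{1,2}$, where $\tau_{1,2}$ is the first time $\tilde\gamma$ disconnects $\infty$ from $p$. For $t < \tau_{1,2}$, both $\infty$ and $p$ lie in the unbounded component of $\mathbb{H} \setminus \tilde\gamma[0, t]$, and I would argue that the force point $0^+$, whose image $O_t^R$ tracks the right side of the hull next to the starting point, remains in the closure of this unbounded component throughout the evolution. It then follows that $0^+$ stays in the same connected component as $\infty$ and $p$ up to time $\tau_{1,2}$, so that neither $\tau_\infty$ nor $\tau_p$ can have occurred before that. The case $p < 0$ is handled symmetrically, using the variant of Remark~\ref{rem:SLEcoord} where $0^-$ plays the role of the demoted force point.

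I expect the main obstacle to be this final geometric step, namely establishing that $0^+$ remains adjacent to the unbounded component of $\mathbb{H} \setminus \tilde\gamma[0, t]$ up to time $\tau_{1,2}$. This is a property of SLE$(\kappa, \rho)$ processes with an attractive force point near the starting point: the bouncing of $\tilde\gamma$ against the boundary creates only infinitesimal pinches at $0^+$, so $0^+$ stays accessible from the unbounded component and is not genuinely swallowed into a bounded bubble before $\infty$ and $p$ themselves become separated by the curve.
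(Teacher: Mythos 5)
The paper states Proposition~\ref{prop:dipolar} without proof, appealing implicitly to Remark~\ref{rem:SLEcoord}; your overall plan --- realize each CDE as an initial segment of a common SLE$(3,-\tfrac32)$ via the coordinate change --- is the natural reading of that remark. The reduction to $(\mathbb{H},0,\infty)$ with $v_2=p>0$ and the identification of the two CDEs with SLE$(3,-\tfrac32)$ stopped at $\tau_\infty$ and $\tau_p$ are both fine.

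The final step, however, has a genuine gap. You need $\min(\tau_\infty,\tau_p)\ge\tau_{1,2}$ and you argue for it by claiming that $0^+$ remains accessible from the unbounded component throughout. This is false. Since $\rho=-\tfrac32<\tfrac{\kappa}{2}-2=-\tfrac12$, the CDE almost surely touches the positive real axis, and with positive probability the first such touch is at some $q\in(0,p)$. At that moment the prime end $0^+$ is enclosed in a bounded component, so $\tau_\infty=\tau_p$ both occur, while $\infty$ and $p$ are still both on the boundary of the unbounded component, so $\tau_{1,2}$ has \emph{not} occurred. Your heuristic conflates the prime end $0^+$ with the Loewner-evolved quantity $O_t^R=g_t(R_t)$: the latter does track the right edge of the hull and always lies on $\partial\mathbb{H}$, but once the curve has touched $q$ it is the image of $q^+$, not of $0^+$, so its persistence says nothing about $0^+$ being swallowed. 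Remark~\ref{rem:SLEcoord} itself cuts the coupling precisely at the disconnection time of $r=0^+$ and $o$; that is exactly this touching time, and it can strictly precede $\tau_{1,2}$.

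Two ways to repair. (i) Iterate: at a touching time $q$ with $q<p$, both CDEs continue as SLE$(3,-\tfrac32,-\tfrac32)$ in the unbounded component with the \emph{same} force points (the leftmost and rightmost touched points), observing $\infty$ and $p$ respectively; reapplying Remark~\ref{rem:SLEcoord} in the new domain couples them over the next excursion, and passing through the closed, measure-zero set of touching times by continuity of the curves extends the coupling to all of $[0,\tau_{1,2})$. (ii) Cleaner, and presumably what the authors have in mind: invoke the full Schramm--Wilson coordinate-change/target-independence theorem, which says an SLE$(\kappa,\vec\rho)$ is indifferent to the choice of observation point exactly when $\sum_i\rho_i=\kappa-6$. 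Here $-\tfrac32-\tfrac32=-3=\kappa-6$, so the CDE is target-independent and the proposition follows in one line. This statement is in the same reference cited for Remark~\ref{rem:SLEcoord} (the remark only records the one-force-point special case), and it avoids all boundary-touching bookkeeping.
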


\section{Two results on the Ising model}\label{sec:bg}

In this section, we state two recent results about the Ising which used in an essential manner in this paper. 

\subsection{A result on convergence of interfaces}
The first result states the convergence of certain interfaces in the Ising model.

Let $(\Omega,b,\ell,r)$ be a simply-connected Jordan domain with three marked points on its boundary (indexed in clockwise order).
We consider the critical Ising model on discrete approximations of this domain, with boundary conditions $+$ on the clockwise arc $[b^\delta \ell^\delta]$, free on $[\ell^\delta r^\delta]$ and $-$ on $[r^\delta b^\delta]$. Call $\gamma^\delta$ the leftmost explorer from $b^\delta$ to $\ell^\delta$.

\begin{thm}\cite[Theorem 1]{HoKy_Isintfreebc}\label{thm:ci}
The law of the leftmost explorer $\gamma^\delta$ until the first hitting time of $[\ell^\delta r^\delta]$ converges to the law of an {\rm SLE}$(3,\frac{-3}2,\frac{-3}2)$ from $b$ to $\ell$ until its first hitting time of $[\ell r]$. The convergence is moreover uniform in the domain (see the original paper).
\end{thm}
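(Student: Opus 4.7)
The plan is to follow the Chelkak--Smirnov paradigm for convergence of critical Ising interfaces to SLE, in the version adapted to mixed $(+,\mathrm{free},-)$ boundary conditions by Hongler and Kyt\"ol\"a. It proceeds in four stages: construction of a discrete martingale observable, convergence to a continuous conformally covariant limit, tightness of the interface, and identification of the driving function.

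First, I would introduce the discrete fermionic (spinor) observable $F^\delta_z$ on $(\Omega^\delta,b^\delta,\ell^\delta,r^\delta)$ at medial or corner points $z$. By Smirnov's local relations, $F^\delta$ is $s$-holomorphic away from the tip of the exploration and satisfies Riemann-type boundary conditions: the tangent-line projection of $F^\delta$ is locked along the $(+)$ and $(-)$ arcs, while along the free arc it satisfies the specific reflection-symmetric condition identified in \cite{HoKy_Isintfreebc} that encodes the $+\leftrightarrow -$ symmetry there. By the domain Markov property of the Ising model, if one explores $\gamma^\delta$ up to step $k$ and restarts the observable in the resulting slit domain, $F^\delta$ is a martingale for the exploration filtration.

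Second, standard $s$-holomorphic estimates (integration of discrete primitives, maximum principles, boundary regularity) give uniform convergence of $F^\delta$ on compact subsets of $\overline{\Omega}\setminus\{b,\ell,r\}$ to the unique continuous solution $F$ of the corresponding Riemann--Hilbert problem. This limit is conformally covariant with explicit square-root behavior at the two boundary points $\ell$ and $r$ where the data switches between fixed and free, reflecting the spin-field conformal weight $1/16$ attached to the free boundary conditions. Tightness of the discrete interfaces $\gamma^\delta$ in the space of curves modulo reparametrization then follows from the Kemppainen--Smirnov criterion, whose annulus-crossing hypothesis is provided by RSW estimates for the critical FK-Ising model \cite{ChDuHoKeSm_Ising} together with the standard spin/FK comparison; the presence of a free arc does not disrupt these estimates, since RSW is robust to boundary conditions up to constants.

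Finally, for any subsequential limit $\gamma$ parametrized by half-plane capacity from $b$, let $(g_t)$, $(U_t)$, $O_t^L = g_t(\ell)$, $O_t^R = g_t(r)$ denote the associated Loewner data. The continuous observable $F_t := F \circ g_t^{-1}$ restarted in the slit domain is a martingale as the limit of discrete martingales. Expanding $F_t$ around the tip $U_t$ and around $O_t^{L},O_t^R$, using the explicit form of $F$ from the previous step, and matching coefficients via It\^o's formula forces $(U_t,O_t^L,O_t^R)$ to satisfy the SLE$(3,-\tfrac32,-\tfrac32)$ system \eqref{eq:sle-krr-du}--\eqref{eq:sle-krr-dor}; uniqueness of this SDE up to the first hitting of $[\ell r]$ then identifies $\gamma$ with the claimed process, and keeping track of constants in the previous steps yields the stated uniformity in the domain. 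The main obstacle is matching the continuous observable's square-root prefactor with drift coefficients $\rho_1 = \rho_2 = -\tfrac32$: the fact that the spin-field exponent on the free arc conspires with $\kappa = 3$ to produce exactly these values (equivalently, $\kappa-6-\rho=\rho$ at $\kappa=3,\rho=-\tfrac32$, cf.\ Remark~\ref{rem:SLEcoord}) is the essentially new computation beyond the Dobrushin setting.
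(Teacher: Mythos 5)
Theorem~\ref{thm:ci} is stated as \cite[Theorem 1]{HoKy_Isintfreebc}, and the present paper explicitly defers to that reference for the proof (including the uniformity-in-domain assertion). There is therefore no proof in this paper to compare against: the result is imported as a black box and used in Step~2 of the proof of Theorem~\ref{thm:dde-to-cde-cv}. Your sketch is thus reconstructing the argument of the cited paper, not of this one.

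As a reconstruction of Hongler--Kyt\"ol\"a it is a plausible high-level outline (discrete $s$-holomorphic spinor observable with a Riemann--Hilbert-type condition encoding the free arc, martingale property from the domain Markov property, convergence of the observable, Kemppainen--Smirnov tightness, and identification of the driving SDE via It\^o), and the consistency check you flag --- that $\rho=-\tfrac32$ is self-dual under $\rho\mapsto\kappa-6-\rho$ at $\kappa=3$ and corresponds to the $1/16$ boundary-changing weight --- is the right one. Two places where the sketch understates the difficulty: (i) the It\^o-matching step identifies the drift in~\eqref{eq:sle-krr-du} only on the set of times with $O^L_t<U_t<O^R_t$; to conclude one must additionally establish the reflection and integrability properties (the analogues of P2--P4') so that the triple is pinned down for all times, and this is not a consequence of the martingale computation alone. (ii) The tightness input needed is a crossing estimate that is uniform over the slit domains and the mixed (plus/free/minus) boundary conditions created by the exploration; this is precisely what \cite[Corollary 1.7]{ChDuHo_CrosProbIs} (Theorem~\ref{thm:cp}) supplies, and it is a genuinely stronger statement than FK-RSW together with a spin/FK comparison.
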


An alternative proof of this result was provided in \cite{Izyurov_free}.
\subsection{A result on crossing probabilities}
Let us recall a classical definition: for a topological rectangle $(\mathcal Q,a,b,c,d)$, the \emph{extremal length}
$\ell_{\mathcal{Q}}\left(\left[ab\right],\left[cd\right]\right)$
is defined as 
\[
\sup_{\rho\: :\: \Omega\to[0,\infty)}\frac{\inf_{p}\left(\int_{p}\rho\left|\mathrm{d}z\right|\right)^{2}}{\int_{\mathcal{Q}}\rho^{2}\mathrm{d}x\mathrm{d}y},
\]
where the infimum is taken over rectifiable paths $p$ from $\left[ab\right]$
to $\left[cd\right]$.

We call a {\em discrete topological rectangle} a subgraph of $\delta\mathbb Z^2$ whose complement is connected, with four marked vertices on its boundary.

\begin{thm}\cite[Corollary 1.7]{ChDuHo_CrosProbIs}\label{thm:cp}
For each $M > 0$ there exists  $\eta=\eta (M) > 0$ such that the following holds: for any discrete topological rectangle $(\mathcal{Q},a,b,c,d)$ with $\ell_{\mathcal{Q}}\left(\left[ab\right],\left[cd\right]\right)\leq M$,
$$
\mu_{\mathcal{Q},\beta_c}^{\mathrm{mixed}}\Big([ab]{\leftrightsquigarrow}[cd]\Big) \geq \eta,
$$
where mixed boundary conditions mean free on $[ab]$ and $[cd]$, and $-$ on $[bc]$ and $[da]$.
\end{thm}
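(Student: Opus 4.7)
I would approach this as an RSW-type estimate for the critical Ising model, translating through the FK-Ising random-cluster representation and using self-duality of the latter at $p_c(2)$.

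First, via the Edwards--Sokal coupling, the measure $\mu^{\mathrm{mixed}}_{\mathcal{Q},\beta_c}$ corresponds to the random-cluster measure with $q=2$ at $p_c(2)$ in which the sites of $[bc]\cup[da]$ are wired into a single boundary cluster carrying spin $-$, while the sites of $[ab]\cup[cd]$ carry free FK boundary conditions. A plus-connection from $[ab]$ to $[cd]$ exists if and only if some non-wired FK-cluster touches both free arcs and is assigned spin $+$. Conditionally on the FK-configuration each such cluster gets spin $+$ with probability $\tfrac{1}{2}$, so it suffices to lower-bound the FK-probability that a non-wired cluster connects $[ab]$ to $[cd]$.

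Second, I would bound this FK-crossing probability from below using the RSW theory for critical FK-Ising: crossing probabilities of quads of bounded extremal length are bounded away from $0$ and $1$, uniformly in the mesh, a fact that follows from self-duality at $p_c(2)$ combined with discrete-complex-analytic inputs (Chelkak--Smirnov fermionic observables) or with more recent combinatorial arguments in the spirit of Bernoulli percolation. The extremal length hypothesis $\ell_{\mathcal{Q}}([ab],[cd])\leq M$ places us in exactly the regime where these estimates apply with a bound $\eta=\eta(M)>0$.

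The main obstacle is the constraint that the connecting cluster avoid the wired arcs $[bc]\cup[da]$, which is a decreasing event for the FK-measure to which FKG-monotonicity does not apply directly. I would handle this by exploring inward from $[bc]\cup[da]$, peeling off the wired cluster and revealing a subdomain with free FK boundary conditions; the extremal length of the remaining quad is still controlled by $M$ (up to a universal factor), and an RSW estimate for free-boundary FK-Ising applied there yields the desired lower bound. Making this exploration quantitative, controlling the extremal-length distortion it induces, and translating the FK-crossing back into a spin-crossing, is the technical heart of the argument carried out in \cite{ChDuHo_CrosProbIs}.
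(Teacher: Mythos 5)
The paper does not prove this statement: it is quoted verbatim from \cite[Corollary~1.7]{ChDuHo_CrosProbIs}, so there is no in-paper argument to compare against; I can only assess your sketch against what is actually done in that reference.

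Your setup is sound. The Edwards--Sokal reduction is correctly stated (wired on $[bc]\cup[da]$, free on $[ab]\cup[cd]$; a plus-crossing requires an FK-cluster meeting both free arcs and avoiding the wired component, with a factor $\tfrac12$ for the conditional spin assignment), and you correctly flag that avoidance of the wired boundary is a \emph{decreasing} event for the random-cluster measure, so FKG cannot be applied directly. The observation that, after peeling off the wired cluster, the remaining domain carries free FK boundary conditions everywhere is also correct.

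The gap is the assertion that ``the extremal length of the remaining quad is still controlled by $M$ (up to a universal factor).'' This is false as a deterministic claim. Peeling off the wired cluster can only \emph{increase} the extremal length between what remains of $[ab]$ and $[cd]$: the cluster pushes the side arcs inward and may swallow arbitrarily large portions of the crossing arcs; and on the event that the wired cluster contains an open path from $[bc]$ to $[da]$, the free arcs are disconnected entirely and the remaining quad is degenerate (infinite extremal length). What you actually need is a \emph{probabilistic} statement -- that with probability bounded below uniformly in the domain, the explored cluster stays confined enough that the residual quad has extremal length bounded by a function of $M$ -- and proving exactly that kind of estimate, uniformly over boundary conditions and over topological rectangles, is the central and nontrivial contribution of \cite{ChDuHo_CrosProbIs}. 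As written, your sketch implicitly presupposes the uniform-in-boundary-conditions crossing input that the cited paper is constructed to supply, rather than deriving it. You do acknowledge that ``controlling the extremal-length distortion'' is where the technical work lies, which is fair, but the ``universal factor'' phrasing reflects the wrong intuition about what kind of bound is available.
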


\section{Proof of Theorem~\ref{thm:dde-to-cde-cv} (Convergence of explorers)}\label{sec:2}

In this section, we fix a simply connected Jordan domain $(\Omega,u,v)$ and a conformal map $\Phi$ from $(\Omega,u,v)$ onto $(\mathbb H,0,\infty)$. We will first prove the convergence result of Theorem~\ref{thm:dde-to-cde-cv} for the leftmost explorer going from $u$ to $v$.

\medbreak
For $n\ge 0$, consider the {\em slit domain} $\Gamma_n$, i.e the subgraph of $\Omega^\delta$ constructed by removing all edges of $\Omega^\delta$ intersecting in their middle the dual-edges of $\gamma^\delta[0,n]$, and then taking the connected component of the new graph containing $v^\delta$.

The boundary of $\Gamma_n$ is composed of three arcs :\begin{itemize}
\item the arc $\mathcal C^+_n$ composed of vertices of $\partial\Gamma_n$ bordering $\gamma^\delta[0,n]$ on its left, hence carrying $+$ spins,
\item the arc $\mathcal C^-_n$ composed of vertices of $\partial\Gamma_n$ bordering $\gamma^\delta[0,n]$ on its right, hence carrying $-$ spins,
\item the arc $\mathcal C^{\rm free}_n=\partial\Gamma_n\setminus(\mathcal C^+_n\cup\mathcal C^-_n)$. Note that $\mathcal C^{\rm free}\subset \partial\Omega^\delta$.
\end{itemize}

Let $L_n^\delta$ (resp. $R_n^\delta$) be the leftmost (resp. rightmost) point of $\partial\Omega^\delta$ reached by $\gamma^\delta[0,n]$.
Note that  $\mathcal{C}^+_n=[\gamma^\delta_n L_n^\delta]$, $\mathcal{C}^-_n=[R_n^\delta\gamma^\delta_n]$ and $\mathcal{C}^{\mathrm{free}}_n=[L_n^\delta R_n^\delta]$.

The proof goes in two steps.

\begin{enumerate}
\item We first prove that the discrete explorations $(\gamma^{\ell,\delta},L^\delta,R^\delta)$ form a tight family of random variables.

\item Then, if $(\gamma^{\ell},L,R)$ is any sub-sequential limit, we can consider the growing hull in $\mathbb H$ generated by $\Phi(\gamma^{\ell})$, and let $(g_t)_{t\ge0}$ be the associated Loewner chain parametrized by $h$-capacity. We call $(U_t)_{t\ge 0}$ its driving process.

Let $O_t^L=g_t(\Phi(L_t))$ and $O_t^R=g_t(\Phi(R_t))$. We show that $(U,O^L,O^R)$ satisfies the conditions P1-4', thus identifying $\gamma^{\ell}$ as being CDE.
\end{enumerate}

This will prove the statement for the leftmost free explorer : since any sub-sequential limit is a CDE, the family of curves $(\gamma^{\ell,\delta})_{\delta>0}$ is convergent. 

\subsection{Step 1: tightness of the law of the leftmost explorer}

\begin{prop}\label{thm:Loewner}
The family $(\gamma^{\ell,\delta})$ is tight. Moreover, any sub-sequential limit $\gamma^{\ell}$ satisfies the following properties:

\begin{itemize}
\item $\gamma^\ell$ is almost surely a non-self-crossing curve.

\item $\gamma^\ell$ can be almost surely parametrized by the $h$-capacity of the hull $\hat K_s$ of $\Phi(\gamma^{\ell}[0,s])$.

\item Consider $(K_t)_{t\ge0}$  the hull $\hat K$ parametrized by its capacity. It is a Loewner chain with a driving process $(U_t)_{t\ge0}$ which is almost surely $\alpha$-H\"older continuous for any $\alpha<1/2$.
\end{itemize}
\end{prop}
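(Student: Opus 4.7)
The plan is to reduce the three statements in Proposition~\ref{thm:Loewner} to a single uniform macroscopic crossing estimate and then apply the Kemppainen--Smirnov precompactness machinery for lattice interfaces. Their framework establishes, for any family of random curves whose subsequential limits are driven by a Loewner chain, that a single unforced-crossing condition (Condition~(G2) in their paper) simultaneously implies tightness in the uniform metric modulo reparametrization, almost sure parametrizability of subsequential limits by half-plane capacity, and $\alpha$-Hölder continuity of the driving process for every $\alpha<1/2$. The estimate one must verify is that for every stopping time $\tau$ of the exploration and every topological annulus $A$ separating $\gamma^{\ell,\delta}_\tau$ from $v^\delta$ with extremal distance at least $M$, the conditional probability that $\gamma^{\ell,\delta}$ makes an unforced crossing of $A$ after time $\tau$ is at most some $\varepsilon(M)\to 0$ as $M\to\infty$, uniformly in $\tau$, $\delta$ and the domain.

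To verify (G2), one conditions on $\gamma^{\ell,\delta}[0,\tau]$, which exposes the three-arc boundary $(\mathcal C^+_\tau,\mathcal C^{\rm free}_\tau,\mathcal C^-_\tau)$ of the slit domain $\Gamma_\tau$ with $+$, free and $-$ boundary conditions respectively. Since $\gamma^{\ell,\delta}$ is the interface between primal $+$ clusters and $\star$-connected $-$ clusters, an unforced radial crossing of $A$ by $\gamma^{\ell,\delta}$ forces the existence of a radial $\star$-connected $-$ path across $A$ inside $\Gamma_\tau$. By planar Ising duality between primal $+$ and $\star$-connected $-$ connectivities, the absence of such a path is implied by the existence of a primal $+$ crossing in the "circular" direction inside a suitable topological sub-rectangle $\mathcal R\subset \Gamma_\tau\cap A$. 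The short sides of $\mathcal R$ lie either on $\gamma^{\ell,\delta}[0,\tau]$ (where the spins are already $\pm$) or on $\mathcal C^{\rm free}_\tau$; by FKG monotonicity of the Ising measure in its boundary conditions, the probability of this $+$ crossing in $\mathcal R$ is bounded below by its probability under the free/minus mixed boundary conditions studied in Theorem~\ref{thm:cp}, which then provides a uniform lower bound $\eta(M)>0$ depending only on the extremal distance of $\mathcal R$. Iterating this bound along a telescoping sequence of concentric annuli yields the required decay $\varepsilon(M)\to 0$, hence Condition~(G2).

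Once (G2) is established, Kemppainen--Smirnov immediately delivers tightness of $(\gamma^{\ell,\delta})_{\delta>0}$, the existence of the $h$-capacity parametrization for any subsequential limit $\gamma^\ell$, and $\alpha$-Hölder regularity of the driving process $(U_t)$ for every $\alpha<1/2$. Non-self-crossing is automatic: each $\gamma^{\ell,\delta}$ is a non-self-crossing path on the dual lattice, and the absence of transverse double points is closed under uniform convergence modulo reparametrization. The main obstacle is expected to be the combinatorial/topological step of isolating the sub-rectangle $\mathcal R$ inside $\Gamma_\tau\cap A$: the three-arc structure of $\partial \Gamma_\tau$ allows qualitatively different configurations (the free arc intersecting $A$, one of $\mathcal C^\pm_\tau$ looping back into $A$, the tip being far from versus close to the boundary), each of which must be reduced, via careful choice of four marked boundary points and an application of FKG, to the setting where Theorem~\ref{thm:cp} applies.
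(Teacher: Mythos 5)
Your overall strategy is exactly the paper's: invoke the Kemppainen--Smirnov framework, which reduces tightness, $h$-capacity parametrizability, and H\"older regularity of the driving process to a single uniform crossing condition (the paper verifies the quadrilateral formulation C2 rather than the annulus formulation G2 you state, but these are equivalent in that framework), and then verify the crossing condition by conditioning on $\gamma^{\ell,\delta}[0,\tau]$, exhibiting a blocking circular crossing, and bounding its probability below via monotonicity and Theorem~\ref{thm:cp}. You also correctly identify the topological casework (which arcs of $\partial\Gamma_\tau$ the quadrilateral abuts) as the substantive obstacle.

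However, you leave that obstacle unresolved, and it is precisely the point of the paper's proof. The case you do not handle -- and which would break your argument -- is when the quadrilateral or annulus has short sides touching the arc $\mathcal C^-_\tau$: a circular $+$ crossing then cannot anchor there, so it does not block the explorer, and your FKG comparison to the free/minus mixed boundary conditions of Theorem~\ref{thm:cp} does not produce a blocking event. The paper closes this case in one line by using \emph{avoidability}: an avoidable rectangle $\mathcal Q$ (i.e., one that does not disconnect $\gamma^{\ell,\delta}_\tau$ from $v$) cannot intersect both $\mathcal C^+_\tau$ and $\mathcal C^-_\tau$, since otherwise it would separate the tip from $v$. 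Hence the quadrilateral abuts only one of the two colored arcs, and one can always block with the matching color ($+$ crossing if $\mathcal C^+_\tau$ and $\mathcal C^{\rm free}_\tau$; $-$ crossing if $\mathcal C^-_\tau$ and $\mathcal C^{\rm free}_\tau$), after which monotonicity and Theorem~\ref{thm:cp} apply directly. Without this observation, the reduction ``to the setting where Theorem~\ref{thm:cp} applies'' that you promise does not go through. A secondary, more cosmetic issue: the paper's Condition C2 does not require decay $\varepsilon(M)\to 0$, only a uniform bound $1-\eta(M)<1$ for each $M$, so the ``telescoping annuli'' iteration you invoke is unnecessary.
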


\begin{proof} Let us first fix some notation.
Consider the family $\mathfrak Q_t$ of discrete topological rectangles $\mathcal{Q}\subset\Omega^\delta\setminus\gamma^{\ell,\delta}[0,t]$ such that the boundary arcs $\left[bc\right]$
and $\left[da\right]$ are on $\partial(\Omega^\delta\setminus\gamma^{\ell,\delta}[0,t])$. Call $\mathcal{Q}$
\emph{avoidable} if it does not disconnect $\gamma^{\ell,\delta}_t$ from $v$.  A sub-path $\gamma^{\ell,\delta}[t_0,t_1]$ {\em crosses} $\mathcal Q$ if there exist $t_0\le s_0\le s_1\le t_1$ such that $\gamma^{\ell,\delta}_{s_0}\in[ab]$ and $\gamma^{\ell,\delta}_{s_1}\in [cd]$. 

The proposition is the conclusion of \cite[Theorem 1.3]{KeSm_LoewnerE}. Hence, using \cite[Proposition 2.2 and Corollary 2.3]{KeSm_LoewnerE}, it suffices to check that the following condition holds:
\medbreak
\noindent \textbf{Condition C2.}
{\em For any $M>0$, there exists $\eta>0$ such that for any $\delta>0$,
any stopping time\footnote{A stopping time $\tau$ is a stopping time for the filtration generated by the curve $(\gamma^{\ell,\delta}_t)$ parametrized by the number of steps.
}  $\tau$ and any avoidable discrete topological rectangle $\mathcal{Q}\subset\mathfrak{Q}_{\tau}$
with $\ell_{\mathcal{Q}}\left(\left[ab\right],\left[cd\right]\right)\geq M$,
\[
\mu_{\Omega^{\delta},\beta_c}^{\mathrm{free}}\Big( (\gamma^{\ell,\delta}_t)_{t\ge \tau}\mbox{ crosses }\mathcal{Q}\Big|\gamma^{\ell,\delta}[0,\tau]\Big) \leq1-\eta.
\]
}

In order to prove that this condition is satisfied, fix $\delta>0$ and $\tau$, as well as a realization of $\gamma^{\ell,\delta}[0,\tau]$. We also fix an avoidable discrete topological rectangle $\mathcal{Q}$ with $\ell_{\mathcal{Q}}\left(\left[ab\right],\left[cd\right]\right)\geq M$.

The event that $\gamma^{\ell,\delta}$ does not cross $\mathcal{Q}$
contains the event that $\mathcal{Q}$ is crossed from $\left[bc\right]$
to $\left[da\right]$ by a crossing of $+$ spins or a crossing of
$-$ spins. Since the discrete rectangle is avoidable, it cannot intersect both boundary arcs with $+$ and $-$ boundary conditions (i.e. $\mathcal C^+_\tau$ and $\mathcal C^-_\tau$), since it would then disconnect $\gamma^{\ell,\delta}_\tau$ from $v$. Without loss of generality, assume that it intersects only the arc with $+$ and free boundary conditions (i.e. $\mathcal C^+_\tau$ and $\mathcal C^{\rm free}_\tau$). The spatial Markov property (or Gibbs property) of the Ising model together with monotonicity with respect to boundary conditions guarantee that
\begin{equation}\label{eq:ineqcross}
\mu_{\Omega^{\delta},\beta_c}^{\mathrm{free}}\Big( (\gamma^{\ell,\delta}_t)_{t\ge \tau}\mbox{ does not cross }\mathcal{Q}\ \Big|\ \gamma^{\ell,\delta}\left[0,\tau\right]\Big) \geq\mu_{\mathcal Q,\beta_c}^{\rm mixed}\Big( \left[bc\right]\leftrightsquigarrow\left[da\right]\Big),
\end{equation}
where the mixed boundary conditions are free on $[bc]$ and $[da]$, and $-$ on $[ab]$ and $[cd]$. Theorem \ref{thm:cp} shows that the right-hand side of \eqref{eq:ineqcross} is larger than some positive quantity $\eta(M)$.
\end{proof}

We will also need to keep track of the points $L^\delta$ and $R^\delta$ in the scaling limit. In order to do this, let us prove the following property of subsequential limits of $\gamma^{\ell,\delta}$, which states that the scaling limit cannot touch the boundary when the discrete explorer does not. We can assume that the curves $\gamma^{\ell,\delta}$ almost surely converge towards a limit $\gamma$ for the uniform topology (i.e. we assume that all the curves are parametrized in a compatible way).

\begin{lem}\label{lem:boundary_touch}
Any time when $\gamma^\ell$ is on the boundary is a limit of boundary hitting times of $\gamma^{\ell,\delta}$.
\end{lem}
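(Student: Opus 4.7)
I would argue by contradiction. Suppose that with positive probability there is a time $t$ satisfying $\gamma^\ell(t)\in\partial\Omega$ but which is not the limit of any sequence of boundary hitting times of $\gamma^{\ell,\delta}$. After passing to a subsequence I can find $\epsilon>0$ such that $\gamma^{\ell,\delta}$ avoids $\partial(\Omega^\delta)^*$ throughout the whole interval $[t-\epsilon,t+\epsilon]$, while uniform convergence forces $\gamma^{\ell,\delta}([t-\epsilon,t+\epsilon])$ to intersect every fixed-size neighborhood of $p:=\gamma^\ell(t)$.

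In particular, for a small fixed $r>0$ the discrete curve enters the ball $B(p,r)$, attains a minimum distance $d_\delta\to 0$ (with $d_\delta\gg\delta$) to $\partial\Omega$ inside $B(p,r)$, and leaves. In a local conformal chart around $p$, the piece of $\gamma^{\ell,\delta}$ inside $B(p,r)$ carves out a thin topological rectangle $\mathcal Q_\delta \subset \Omega^\delta$ whose two long sides are an arc of $\partial\Omega^\delta$ (with free boundary conditions) and a sub-arc of $\gamma^{\ell,\delta}$ (on whose $\mathcal Q_\delta$-side the spins are all $+$ or all $-$; say all $+$ after extracting a further subsequence), and whose two short sides have length of order $d_\delta$.

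The key estimate will come from Theorem~\ref{thm:cp}: by design, the extremal length of $\mathcal Q_\delta$ between its two short sides stays bounded. Revealing $\gamma^{\ell,\delta}$ up to its entry into $B(p,r)$ via the spatial Markov property of the Ising model, and dominating via FKG monotonicity (up to the $\mathbb Z_2$ spin-flip symmetry, if needed) the mixed boundary conditions appearing in Theorem~\ref{thm:cp}, I would obtain with probability at least some $\eta>0$ (uniform in $\delta$) a $+$-connection between the two short sides of $\mathcal Q_\delta$ through the Ising configuration in $\mathcal Q_\delta$. By the leftmost character of $\gamma^{\ell,\delta}$, the existence of such a monochromatic connection adjacent to the revealed curve forces the exploration to follow the $+$-cluster to the free boundary and therefore to touch $\partial(\Omega^\delta)^*$ inside $B(p,r)$, contradicting the non-touching assumption. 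Since in one copy this only yields a lower bound $\eta>0$ on the touching probability, I plan to iterate the construction across a nested family of scales $d_\delta\lesssim d\lesssim r$, producing essentially independent copies of the same crossing event whose simultaneous non-crossings have probability tending to zero.

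\textbf{Main obstacle.} The main technical difficulties are the geometric bookkeeping — identifying $\mathcal Q_\delta$ rigorously when the curve may make several excursions of varying orientations into $B(p,r)$, and freezing the relevant features via a diagonal subsequence extraction — together with upgrading the uniform lower bound $\eta>0$ of Theorem~\ref{thm:cp} into a touching probability that tends to one via the multi-scale iteration. Verifying that a $+$-connection in $\mathcal Q_\delta$ genuinely forces a boundary touching (rather than merely being compatible with a winding non-touching configuration) is the last non-trivial point, and relies in an essential way on the \emph{leftmost} definition of $\gamma^{\ell,\delta}$.
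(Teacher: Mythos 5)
Your proposal shares with the paper the key input, namely the uniform crossing bound of Theorem~\ref{thm:cp}, and the basic mechanism (a monochromatic crossing in a thin strip between the discrete explorer and $\partial\Omega$ forces the explorer to touch the boundary) is indeed what Figure~\ref{fig:bound} in the paper depicts. However, there are concrete issues in the way you set it up. The extremal-length claim appears to be reversed: for a thin rectangle of length $\sim r$ and width $\sim d_\delta\to 0$, the extremal length between the two \emph{short} sides is of order $r/d_\delta\to\infty$, so the crossing probability between them degenerates; what stays bounded (and in fact tends to $0$) is the extremal length between the two \emph{long} sides, and it is a crossing between those sides -- from the monochromatic arc of $\gamma^{\ell,\delta}$ across to the free arc of $\partial\Omega^\delta$ -- that is the relevant one. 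Also, $\mathcal{Q}_\delta$ as you define it depends on the entire trajectory of $\gamma^{\ell,\delta}$ inside $B(p,r)$, which is not determined by the curve ``up to its entry into $B(p,r)$'', so the spatial-Markov step must be set up at a genuine stopping time (for instance the first time the explorer is within distance $d$ of $\partial\Omega$) with a rectangle measurable at that time.

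A more structural gap is the reduction to a tractable set of times. Your contradiction argument begins from an exceptional \emph{random} time $t$ and random point $p=\gamma^\ell(t)$, but does not say how to turn this into a stopping-time statement or into countably many events to which the quantitative crossing estimate can be applied. The paper does this upfront: since any subsequential limit $\gamma^\ell$ is generated by a Loewner chain parametrized by $h$-capacity (Proposition~\ref{thm:Loewner}), it cannot spend a time interval on $\partial\Omega$, so $\{t:\gamma^\ell(t)\in\partial\Omega\}$ has empty interior and every such $t$ lies in the closure of the \emph{countable} set of boundary hitting times; the crossing estimate then only needs to be applied along that countable set. Your multi-scale iteration goes in the right direction for upgrading $\eta>0$ to probability tending to $1$, but without the reduction to countably many times it does not close the argument. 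Finally, as you yourself flag, it remains to justify that a single $+$-connection genuinely forces a touching: the paper's figure uses \emph{both} a $+$-crossing on the left and a $-$-crossing on the right of the explorer to pin it onto $\partial\Omega$, whereas you only argue with one color.
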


\begin{proof}
The curve $\gamma^\ell$ can be parametrized by capacity (Proposition \ref{thm:Loewner}) hence it does not stay on the boundary. Any time when $\gamma^\ell$ is on the boundary is hence in the closure of the set of boundary hitting times, and we can reduce to proving the statement for such times.

Now, each time the explorer $\gamma^{\ell,\delta}$ is inside the domain and gets close to the boundary we can use crossing estimates (Theorem \ref{thm:cp}) to see that with high probability the explorer touches the boundary soon after (see Figure \ref{fig:bound}). We leave the details of the previous claim, which are classical, to the reader. As there are countably many boundary hitting times, this yields the claim.
\end{proof}

\begin{figure}[htb]
\begin{center}
\includegraphics[width = 10cm]{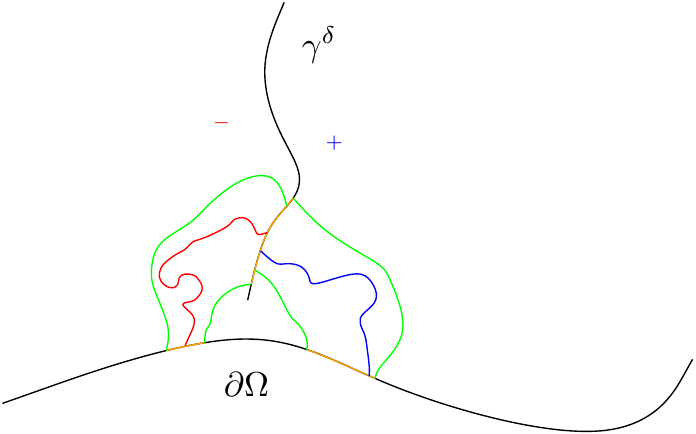}
\caption{With high probability, the rectangle to the left (resp. to the right) of $\gamma^\delta$ contain a crossing of $+$ (resp $-$) between its orange boundaries. These crossings forces the explorer $\gamma$ to touch the boundary $\partial \Omega$ in a neighborhood.}
\label{fig:bound}
\end{center}
\end{figure}

The previous results imply the following, where the topology used is the weak topology associated to uniform convergence (up to common reparametrization) of the triplet of functions.
\begin{cor}
On any subsequence such that $\gamma^{\ell,\delta}$ converges, the triplet $(\gamma^{\ell,\delta},L^\delta,R^\delta)$ converges to $(\gamma^\ell,L,R)$ where $L$ (resp. $R$) is the leftmost (resp. rightmost) boundary point reached by $\gamma^\ell$.
\end{cor}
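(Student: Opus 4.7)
The plan is to establish pointwise convergence of the boundary processes $L^\delta$ and $R^\delta$ to $L$ and $R$ by squeezing from above and below, using the uniform curve convergence $\gamma^{\ell,\delta}\to\gamma^\ell$ together with Lemma~\ref{lem:boundary_touch}, and then to promote this to convergence of the triplet in the topology of uniform convergence up to common reparametrization via monotonicity. I work on the almost sure coupling introduced just before the statement of the corollary, so that $\gamma^{\ell,\delta}\to\gamma^\ell$ uniformly in a common time parametrization. The process $L^\delta_t$ is monotone along the ``left'' arc of the boundary starting at $u$; parametrizing that arc by arclength, identify $L^\delta$ and $L$ with non-decreasing real-valued processes (``further left'' = ``larger''). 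The argument for $R^\delta$ is identical by symmetry.

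For the upper bound I would write $L^\delta_t=\gamma^{\ell,\delta}_{s^\delta}$ for some $s^\delta\le t$; extracting a further subsequence along which $s^\delta\to s^\star\in[0,t]$, the uniform convergence of curves together with the convergence $\partial\Omega^\delta\to\partial\Omega$ yields $L^\delta_t\to\gamma^\ell_{s^\star}\in\partial\Omega\cap\gamma^\ell[0,t]$, so $\limsup_\delta L^\delta_t\le L_t$ in the chosen orientation. For the lower bound, fix $s<t$ with $\gamma^\ell_s\in\partial\Omega$. Lemma~\ref{lem:boundary_touch} produces discrete boundary hitting times $s^\delta\to s$ of $\gamma^{\ell,\delta}$ such that $\gamma^{\ell,\delta}_{s^\delta}\to\gamma^\ell_s$; since $s^\delta<t$ for $\delta$ small, one has $L^\delta_t\ge\gamma^{\ell,\delta}_{s^\delta}$ and the limit is at least $\gamma^\ell_s$. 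Taking the supremum over such $s$ gives $\liminf_\delta L^\delta_t\ge L_{t-}$, the left limit of the monotone process $L$.

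Combining the two bounds yields $L^\delta_t\to L_t$ at every continuity point of $L$; since $L$ is monotone, it has at most countably many discontinuities, and pointwise convergence therefore holds on a dense subset of times. To upgrade this into uniform convergence of the triplet under a single reparametrization, I would further adjust the reparametrization by a homeomorphism close to the identity that aligns the (countably many) jump times of $L^\delta$ and $R^\delta$ with those of $L$ and $R$. These jumps occur precisely at times when the curve reaches a new extremal boundary point, and by Lemma~\ref{lem:boundary_touch} each limiting jump time is approximated by a corresponding discrete jump time, so the required adjustment vanishes as $\delta\to 0$ and does not disturb the uniform convergence of the curves.

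The main obstacle I anticipate is precisely this last topological step: the pointwise squeeze is essentially immediate from uniform curve convergence plus the boundary-touch lemma, but realizing uniform convergence of the three monotone-with-jumps components under a single time change requires matching the jumps of $L^\delta$ and $L$ (and of $R^\delta$ and $R$) without destroying the already established curve convergence. Everything hinges on the two-sided control provided by Lemma~\ref{lem:boundary_touch}, which ensures both that every boundary touch in the limit is approximated discretely (lower bound and jump alignment) and, combined with uniform convergence of curves, that no spurious boundary touches appear in the limit (upper bound).
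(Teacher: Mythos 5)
The paper does not spell out a proof of this corollary; it is presented as an immediate consequence of Proposition~\ref{thm:Loewner} and Lemma~\ref{lem:boundary_touch}. Your argument fills in exactly that gap and uses precisely those two ingredients, so the approach matches what the paper intends. The two-sided squeeze is sound: the upper bound follows from the fact that limits of points of $\partial\Omega^\delta$ lie in $\partial\Omega$ and limits of the running leftmost point of $\gamma^{\ell,\delta}[0,t]$ are boundary points of $\gamma^\ell[0,t]$; the lower bound uses Lemma~\ref{lem:boundary_touch} to manufacture discrete boundary hits approximating every boundary hit of $\gamma^\ell$ strictly before time $t$, giving $\liminf L^\delta_t \ge L_{t-}$. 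One small point worth making explicit: in the lower bound you should note that if $\gamma^\ell_s$ lies in the interior of the left boundary arc $[uv]$, then by uniform convergence the approximating discrete boundary hits are also on the left arc for $\delta$ small, so the inequality $L^\delta_t\ge \gamma^{\ell,\delta}_{s^\delta}$ is legitimate.

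The final upgrade to convergence of the triplet under a single reparametrization is the only place where you are a bit informal, but the idea is correct and necessary: since $L$ is a monotone c\`adl\`ag process that can genuinely jump (each jump corresponding to the CDE returning to the boundary strictly to the left of the previously reached leftmost point, thereby swallowing an arc), pointwise convergence at continuity times does not alone give uniform convergence. Matching the (finitely many) jumps of size $\ge\varepsilon$ by a small time change $\varphi_\delta\to\mathrm{id}$, noting that $\gamma^{\ell,\delta}\circ\varphi_\delta\to\gamma^\ell$ still holds uniformly since $\gamma^\ell$ is uniformly continuous, and controlling the remaining small jumps by a Skorokhod-type estimate, closes the argument. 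This is consistent with the topology the paper announces for this corollary (``uniform convergence up to common reparametrization of the triplet of functions''). In short: correct, same route, with the extra care on jump alignment that the paper leaves implicit.
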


\subsection{Step 2: identification of the scaling limit of the leftmost explorer}

Let $\gamma^\ell$ be a possible sub-sequential limit of $(\gamma^{\ell,\delta})_{\delta>0}$. Recall that we defined $O^L$ (resp. $O^R$) to be $g_t(\Phi(L_t))$ (resp. $g_t(\Phi(R_t))$) where $\Phi$ uniformizes $(\Omega,u,v)$ to the upper half-plane $(\mathbb{H},0,\infty)$, and where $g_t$ is the Loewner chain associated with $\Phi(\gamma^\ell)$. We also denote by $U_t$ the driving process of the Loewner chain $\Phi(\gamma^\ell)$.
We now want to prove that the triplet $(U,O^L,O^R)$ satisfies the four properties P1, P2, P3 and P4' that characterize SLE$(3,\frac{-3}2,\frac{-3}2)$.

\begin{proof}[Proof of Property P1]
The curve $\gamma^\ell$ is the limit of discrete interfaces $(\gamma^{\ell,\delta})_{\delta> 0}$ which satisfy a spatial Markov property. At any stopping time $\tau$ when $\gamma^{\ell,\delta}$ is away from $L^\delta$ and $R^\delta$, we are in the setup of Theorem \ref{thm:ci} and hence $(\gamma^{\ell,\delta})_{t\ge \tau}$ is close to an SLE$(3,\frac{-3}2,\frac{-3}2)$ in the slit domain $\Gamma_\tau=\Omega^\delta\setminus\gamma^{\ell,\delta}[0,\tau]$. In other words, if $t$ is a time such that $O^L_t<U_t<O^R_t$, the process $(g_t(\gamma^\ell_{t+s}))_{s\ge 0}$ up to the first hitting time of $(-\infty,O^L_t]\cup[O^R_t,\infty)$ is a SLE$(3,\frac{-3}2,\frac{-3}2)$ with force points $O^L_t$ and $O^R_t$. Hence, the equation \eqref{eq:sle-krr-du} is satisfied on the set of times $t$ such that $O^L_t<U_t<O^R_t$. Whenever $O^L_t<U_t$, the point $L$ is locally constant, hence $O^L_t=g_t(\Phi(L_t))$ simply evolves according to the Loewner flow equation \eqref{eq:sle-krr-dol}. Same goes for $O^R$ and \eqref{eq:sle-krr-dor}.
\end{proof}

\begin{proof}[Proof of Property P2]
The second property follows from the geometry : $O^L_t$ is the leftmost point absorbed by the Loewner chain. In particular, it always sits to the left of $U_t$. Same goes for $O^R_t$.
\end{proof}

\begin{proof}[Proof of Property P3]
We want the sets $\{t\ge 0:U_t=O^L_t\}$ and $\{t\ge 0:U_t=O^R_t\}$ to be (almost surely) of Lebesgue measure 0.
This is a deterministic property satisfied by any Loewner chain generated by a continuous driving function (we refer to \cite[Lemma 2.5]{MilSheIG1}).
\end{proof}

\begin{proof}[Proof of Property P4']
The last property corresponds to the third property of Proposition \ref{thm:Loewner}.
\end{proof}

This concludes the proof of the convergence of the leftmost explorer towards CDE. The same argument works as well for the rightmost explorer. The couple of discrete curves $(\gamma^{\ell,\delta}_{u^\delta,v^\delta},\gamma^{r,\delta}_{u^\delta,v^\delta})$ hence jointly converge to the same limit $\gamma$. Indeed, on the one hand $\gamma^r$ is always to the right of $\gamma^\ell$, but on the other hand, these two curves have the same law.

Let us now focus on families of arbitrary exploration processes. Any exploration process $\gamma^\delta$ from $u^\delta$ to $v^\delta$ is squeezed between $\gamma^{\ell,\delta}_{u^\delta,v^\delta}$ and $\gamma^{r,\delta}_{u^\delta,v^\delta}$ and thus converges towards a curve $\tilde{\gamma}$ which is almost surely equals to $\gamma$. But this argument actually only ensures that the traces of $\tilde{\gamma}$ and $\gamma$ are the same. To deduce that $\tilde{\gamma}$ is a CDE, we need to prove that it is a simple curve, or equivalently, that it cannot trace back its steps. This is implied by the following lemma.

\begin{lem}\label{lem:hairy}
The set of edges that are used by both the leftmost and the rightmost explorers is dense on the leftmost explorer trace in the scaling limit.
\end{lem}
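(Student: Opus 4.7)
The plan is to show that for every open ball $B\subset\Omega$ meeting the limit trace of $\gamma^{\ell}$, the probability that $B$ contains a dual edge traversed by both $\gamma^{\ell,\delta}$ and $\gamma^{r,\delta}$ tends to $1$ as $\delta\to 0$. Applied to a countable basis of such balls, together with compactness of the trace, this is precisely what density of the common-edge set on the trace in the scaling limit means.

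The combinatorial starting point is that the leftmost and rightmost explorers make identical choices at every dual vertex, except at those whose surrounding face carries the alternating spin pattern $+{-}+{-}$; at every other vertex the admissible next step is uniquely determined by the configuration and the incoming direction. Consequently, a common edge of $\gamma^{\ell,\delta}$ and $\gamma^{r,\delta}$ is produced as soon as both curves visit the same dual vertex $w$ whose surrounding face is \emph{non-alternating}. The task therefore reduces to exhibiting, with probability tending to $1$, a shared non-alternating meeting point of the two discrete curves inside $B$.

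To produce one, I would use that $(\gamma^{\ell,\delta},\gamma^{r,\delta})$ jointly converge to $(\gamma,\gamma)$ with $\gamma$ a simple curve (since $\kappa=3\le 4$), together with the pointwise leftmost--rightmost order. These two ingredients force the two discrete curves to stay Hausdorff-close in $B$, and more strongly to share arbitrarily many dual vertices there in the limit: if a macroscopic sub-arc of $\gamma$ carried no shared vertex, then in $B$ the two explorers would bound a non-degenerate strip, whose left boundary has $+$ spins on its outer side and $-$ spins on its inner side while its right boundary has $-$ spins on its outer side and $+$ spins on its inner side; the spatial Markov property forces a $\pm$-interface to be present inside the strip, and applying the crossing estimates of Theorem~\ref{thm:cp} to nested topological rectangles inside the strip would build a third macroscopic interface curve in $B$ incompatible with the fact that both $\gamma^{\ell,\delta}$ and $\gamma^{r,\delta}$ converge to the same simple curve $\gamma$. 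At each shared meeting vertex, the non-alternating event has a positive lower bound on its conditional probability that is uniform in $\delta$ and in the exploration history (by monotonicity in boundary conditions and the Ising spatial Markov property), so among the many meetings produced in $B$, at least one is non-alternating with probability tending to $1$, yielding the desired common edge.

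The main obstacle is the geometric step---showing that joint convergence of $(\gamma^{\ell,\delta},\gamma^{r,\delta})$ to a single simple curve, combined with the leftmost--rightmost order, actually forces densely many discrete meeting points on every macroscopic sub-arc of the limit trace. Once this is made precise, the remaining extraction of a non-alternating meeting point is a routine application of the Ising model estimates already used in this paper.
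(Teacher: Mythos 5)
Your argument takes a genuinely different route from the paper's, and there are real gaps in it. The paper's proof constructs shared edges \emph{directly} rather than first establishing shared dual vertices and then extracting non-alternating ones: it applies the crossing estimate (Theorem~\ref{thm:cp}) in very thin rectangles anchored on the right side of the leftmost explorer to produce ``hairs''---$\star$-paths of $-$ spins starting at the right of $\gamma^{\ell,\delta}$ and extending a positive macroscopic distance. Since $\gamma^{r,\delta}$ converges to the same limit it cannot go around a hair, and it cannot cross one because of the spin constraints, so it is forced through the hair's base; at the base the two explorers traverse the same dual edge. This yields density of shared edges in one stroke, with no need to analyze faces.

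Within your own scheme, the combinatorial observation is essentially fine: at a non-alternating face the admissible dual edges and their orientation are forced, so a shared vertex with a non-alternating face does yield a shared edge. But the two remaining steps each have a gap. The geometric step---forcing densely many shared vertices---is not established by the ``third macroscopic interface'' contradiction: since $\gamma^{\ell,\delta}$ and $\gamma^{r,\delta}$ converge to the same simple curve, any strip they bound has width going to zero, so an interface inside the strip would itself converge to the same curve $\gamma$, and there is nothing incompatible about that. Moreover Theorem~\ref{thm:cp} is stated for topological rectangles with free boundary conditions on the two crossing arcs, not for a long thin strip with fixed $-$ and $+$ on its two long sides, so it does not apply as invoked. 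The probability step is also delicate: conditionally on $\gamma^{\ell,\delta}[0,\cdot]$ and $\gamma^{r,\delta}[0,\cdot]$, the local spin configuration at a shared vertex may already be determined by the two traces, so there is no obvious uniform lower bound for a ``fresh'' non-alternating event. You flag the geometric step as the main obstacle yourself, and indeed that is exactly where the paper's hair construction does the work your argument leaves open.
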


\begin{proof}
Following the leftmost explorer, we apply crossing estimates in very thin rectangles (Theorem \ref{thm:cp}) at regularly spaced intervals to get paths of minusses (or {\em hairs}) that start at the very right of the exploration and reach to a positive distance (see Figure \ref{fig:hair}). The leftmost and the rightmost explorers being the same in the scaling limit, the rightmost explorer cannot go around the hairs, and hence has to meet the leftmost explorer at the base of every hair, which we just argued is a dense set in the scaling limit.
\end{proof}

\begin{figure}[htb]
\begin{center}
\includegraphics[width = 10cm]{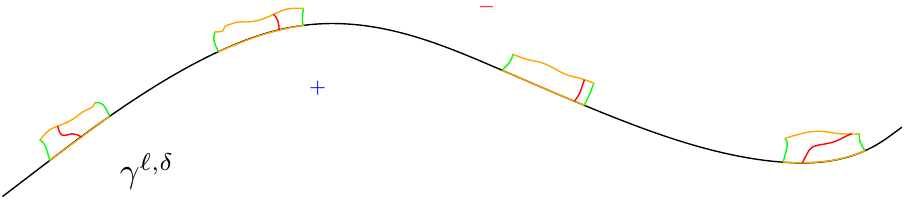}
\caption{Crossing estimates ensure with high probability that the explorer is hairy.}
\label{fig:hair}
\end{center}
\end{figure}

Note that an edge used by both the leftmost and rightmost explorers needs to be used by any exploration with the same endpoints, and moreover it constitutes a point of no-return for any such exploration. The fact that these edges form a dense set ensures that no exploration can trace back its steps on a macroscopic scale. This concludes the proof of the fact that arbitrary explorations between the same endpoints jointly converge towards the same CDE.

\section{Proof of Theorem~\ref{thm:main} (Convergence of crossing probabilities)}\label{sec:3}

Fix a topological rectangle $(\Omega,a,b,c,d)$. Let $a^\delta,b^\delta,c^\delta$ and $d^\delta$ on $\partial \Omega^\delta$ be discrete approximations of these points. Let $u$ be the boundary vertex of $\partial(\Omega^\delta)^*$ corresponding to the dual edge immediately clockwise to $a^\delta$, and let $v\in\partial(\Omega^\delta)^*$ be any boundary vertex adjacent to a vertex in $[b^\delta,c^\delta]$. Then

\begin{align*}\label{eq:expression}
\mu_{\Omega^\delta,\beta_c}^{\rm free} \big(\gamma^{\ell,\delta}_{u,v}\mbox{ hits }\left[c^\delta d^\delta\right]\mbox{ before }\left[b^\delta c^\delta\right]\big)&=\mu_{\Omega^{\delta},\beta_c}^{\mathrm{free}}\big( \left[a^\delta b^\delta\right]\overset{\star}{\leftrightsquigarrow}\left[c^\delta d^\delta\right]\big)\\
\mu_{\Omega^\delta,\beta_c}^{\rm free} \big(\gamma^{r,\delta}_{u,v}\mbox{ hits }\left[c^\delta d^\delta\right]\mbox{ before }\left[b^\delta c^\delta\right]\big)&=\mu_{\Omega^{\delta},\beta_c}^{\mathrm{free}}\big( \left[a^\delta b^\delta\right]{\leftrightsquigarrow}\left[c^\delta d^\delta\right]\big)
\end{align*}

Since $(\gamma^{\ell,\delta}_{u,v})$ and $(\gamma^{r,\delta}_{u,v})$ converge in law to the CDE from $a$ to $v$, and thanks to Lemma \ref{lem:boundary_touch}, we have :

$$
\lim_{\delta\to0}\mu_{\Omega^{\delta},\beta_c}^{\mathrm{free}}\big( \left[a^\delta b^\delta\right]\overset{\star}{\leftrightsquigarrow}\left[c^\delta d^\delta\right]\big)=\lim_{\delta\to0}\mu_{\Omega^{\delta},\beta_c}^{\mathrm{free}}\big( \left[a^\delta b^\delta\right]{\leftrightsquigarrow}\left[c^\delta d^\delta\right]\big)=f(\Omega,a,b,c,d)
$$

where 
\begin{equation*}f(\Omega,a,b,c,d):=\mathbb P\Big({\rm CDE}\mbox{ hits }\left[cd\right]\mbox{ before }\left[bc\right]\Big).\end{equation*}

Note that the exact position of the point $v\in[b,c]$ in the preceding argument does not matter. This is coherent with the fact that, according to Proposition \ref{prop:dipolar}, the law of {\rm CDE} until its first hitting time of $[bc]$ is independent of the choice of the observation point $v\in[bc]$.

\section{Proof of Theorem~\ref{thm:free-arc-cv} (Convergence of the arc ensemble)}\label{sec:4}

\subsection{Definition of the Free Arc Ensemble}
We somehow follow Schramm \cite{S0} to describe the Free Arc Ensemble (FAE). 

For a metric space $X$, let $\mathcal{H}(X)$ be the set of compact subsets of $X$ equipped with the Hausdorff topology, i.e. the topology induced by the distance
$$
d_{\mathcal{H}(X)}(K,K')=\max_{x\in K} \min_{x'\in K'}\{d_X(x,x'),1\}
$$

Recall that for a Jordan domain $\Omega$ of the plane, we denote by $\mathcal{C}\left(\overline{\Omega}\right)$ the set of continuous maps from $[0,1]$ to $\overline{\Omega}$ equipped with the topology of uniform convergence up to reparametrization.

The FAE is a random element of $\mathcal{H}\left(\partial\Omega\times\partial\Omega\times\mathcal{C}\left(\overline{\Omega}\right)\right)$. Its measure is supported in a specific subset of it: the FAE can almost surely be written as
$$
\bigcup_{u,v\in\partial\Omega} (u,v,\gamma_{u,v})
$$
where $\gamma_{u,v}$ is generically a simple path from $u$ to $v$ in $\overline{\Omega}$ -- or the degenerate path $\{u\}$ when $u=v$. However, for (countably many) exceptional couples of boundary points $(u,v)$, $\gamma_{u,v}$ is a set that consists of two simple paths from $u$ to $v$ in $\overline{\Omega}$ -- or even of two simple loops rooted at $u$ and of the degenerate path $\{u\}$ when the boundary point $u=v$ is exceptional. Therefore, $\gamma_{u,v}$ can also be the union of two or three curves instead of one.

\medbreak
Let us construct the FAE as follows:

\begin{enumerate}
\item First, fix $u\in\partial\Omega$ and let $(v_1,\dots,v_n,\dots)$ be a dense subset of $\partial\Omega$. We construct the triplets $(u,v_i,\gamma_{u,v_i})$ recursively. Let $\gamma_{u,v_1}$ be a CDE from $u$ to $v_1$. The exploration $\gamma_{u,v_i}$ starts by following the previously built exploration paths $(\gamma_{u,v_j},j<i)$ by choosing, at each branching point, to go in the region that contains $v_i$ on its boundary (see Figure \ref{fig:branch}) At the first branching point for which there is no such possible choice, the path $\gamma_{u,v_i}$ continues as a CDE from $u$ to $v_i$ would (this is possible thanks to Proposition \ref{prop:dipolar}).

\begin{figure}[htb]
\begin{center}
\includegraphics[width = 10cm]{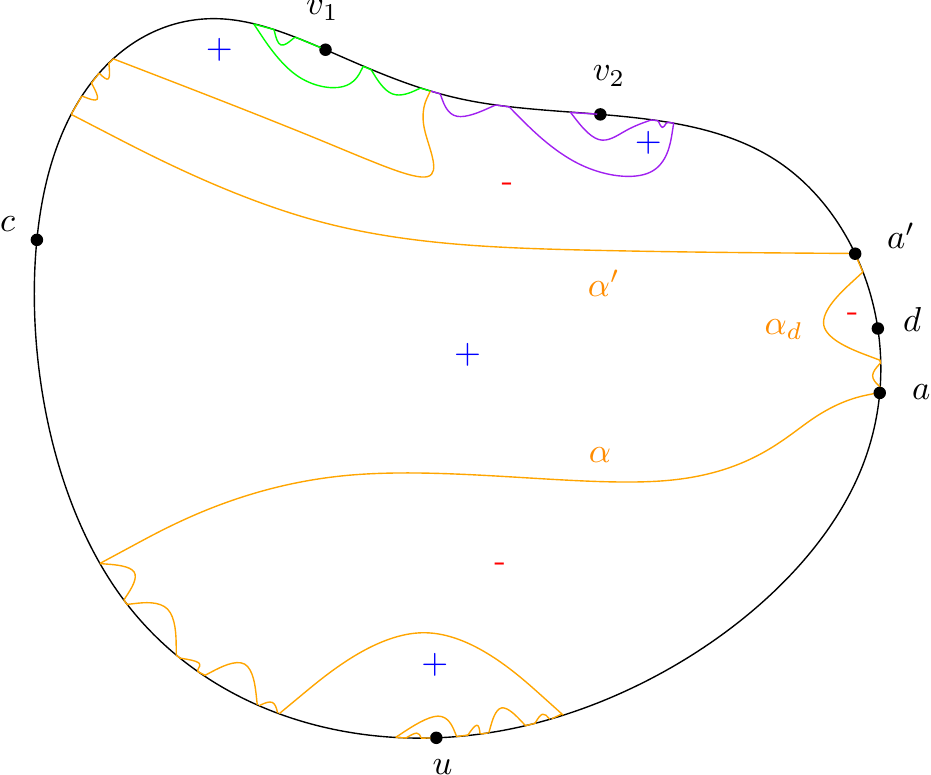}
\caption{The coupling of $\gamma_{u,v_1}$ (orange then green) and $\gamma_{u,v_2}$ (orange then purple). The $+$ and $-$ show what the spins are on the boundary of the subdomains.}
\label{fig:branch}
\end{center}
\end{figure}

\noindent{\em Remark.} The union of the traces of the $\gamma_{u,v_i}$ for $i\ge1$ can be decomposed into countably many arcs (i.e. excursions out of the boundary), which come with a natural orientation depending on the orientation of the $\gamma_{u,v_i}$. The ends of these arcs form a countable (dense) subset $E$ of the boundary $\partial \Omega$.

\item Let us now build $\gamma_{u,v}$ for an arbitrary boundary point $v$ on $\partial \Omega$: simply take the limit of $\gamma_{u,v_i}$ as $v_i \rightarrow v$, or alternatively  follow the tree composed of the $(\gamma_{u,v_i},i\ge1)$ heading systematically towards $v$.

\item Let us describe how to build $\gamma_{u',v'}$ for $u',v'\in\partial\Omega\setminus E$. 
Consider the set of arcs disconnecting $u'$ from $v'$. It comes with a natural ordering (that makes it isomorphic to $\mathbb{Z}$). We call these arcs {\em sideways arcs}. The path $\gamma_{u',v'}$ will go through all of the sideways arcs in order, and we now need to explain how it will go from one sideways arc to the next. 

Let $\alpha$ and $\alpha'$ be two successive sideways arcs. Suppose the arc $\alpha$ crosses from left to right seen from $u'$, and call $[aa']$ the right piece of boundary between $\alpha$ and $\alpha'$ (where $a$ is on $\alpha$ and $a'$ on $\alpha'$). Choose also a boundary point $c$ on the left piece of boundary between $\alpha$ and $\alpha'$. For any point $d \in [aa']$, let us call $\alpha_d$ the largest arc disconnecting $c$ and $d$ that still has its two endpoints on $[aa']$. Between $\alpha$ and $\alpha'$, the path $\gamma_{u',v'}$ follows the arcs $\alpha_d$ in their order of appearance i.e. as $d$ follows $[aa']$ from $a$ to $a'$ (see Figure \ref{fig:branch}). One can easily check that $\gamma_{u',v'}$ can follow these arcs with their natural orientation.

\item Let us now build $\gamma_{u',v'}$ whenever $u'$ and $v'$ are not the two ends of a common arc. Assume for instance that $u'\in\partial\Omega\setminus E$ and $v'$ is the end of an arc $\alpha$ (other situations can be handled similarly). Choose a point $w\in\partial\Omega\setminus E$ that is disconnected from $u'$ by the arc $\alpha$. By construction, $\gamma_{u',w}$ goes through $v'$, and so we let $\gamma_{u',v'}$ be the subpath of $\gamma_{u',w}$ going from $u'$ up to $v'$.

\item We now explain how to build $\gamma_{u',v'}$ when $u',v'$ are the two ends of some arc $\alpha$. If $\alpha$ goes from $u'$ to $v'$, then simply let $\gamma_{u',v'}=\alpha$. If the arc $\alpha$ is oriented from $v'$ to $u'$, then $\gamma_{u',v'}$ is a set of two simple paths, one going to the left of $\alpha$, the other going to its right. The construction of these two simple paths is in the same spirit as above. For example, the path going to the right of $\alpha$ can be build in the following way. Let us call $[u',v']$ the part of the boundary $\partial\Omega$ sitting to the right of the arc $\alpha$. Choose also a boundary point $c$ on the boundary arc to the left of $\alpha$. For all points $d \in[u',v']$, let us call $\alpha_d$ the largest arc disconnecting $c$ and $d$ that stays strictly to the right of $\alpha$. The right exploration path from $u'$ to $v'$ will follow the arcs $\alpha_d$ in their order of appearance (i.e. as $d$ follows $[u'v']$ from $u'$ to $v'$). 

\item To finish our construction, we need to describe the (set of) path(s) $\gamma_{u',u'}$. One choice is the constant curve $\{u'\}$. If moreover an arc $\alpha$ starts or ends at $u'$, $\gamma_{u',u'}$ will moreover contain two simple loops that can be build by concatenation of $\gamma_{u',w}$ and $\gamma_{w,u'}$, where $w$ is the other end of the arc $\alpha$ (one of these sets is a singleton and the other a pair, which one is which depending on the orientation of the arc $\alpha$).
\end{enumerate}

\begin{defn}
The {\em Free Arc Ensemble} is the union FAE of triples $(u,v,\gamma_{u,v})$ coupled as described above, where $u$ and $v$ run over all couples of boundary points and $\gamma_{u,v}$ is the set of possible explorations from $u$ to $v$ (there can be one, two or three such explorations).
\end{defn}

It will be technically easier (for instance to get tightness) for us to use an alternate way to describe FAE, that we call FAE' until proven to be the same as FAE. The construction differs from the one of FAE only in its first step. We choose a countable dense set of points $\mathcal{P}= \{u_1,u_2,\ldots\}$ on $\partial\Omega$. We couple all the paths $\gamma_{u_i,u_j}$ by induction.
\begin{itemize}
\item[(1a)] Take $\gamma_{u_1,u_2}$ to be a CDE, and let $\gamma_{u_2,u_1}$ be given conditionally on $\gamma_{u_1,u_2}$ as follows (see Figure \ref{fig:b_a_f}): it takes the same sideways arcs. It goes from one arc to the next as conditionally independent SLE$(3,\frac{-3}2)$ in each domain\footnote{The scaling limits of the discrete interfaces satisfy this property : it is a statement very similar to Theorems \ref{thm:ci} and \ref{thm:dde-to-cde-cv} and the proof would follow the same lines.}, where the forced point starts at $0^+$ or $0^-$ depending on boundary conditions which we recover from the orientation of the path $\gamma_{u_1,u_2}$.

\begin{figure}[htb]
\begin{center}
\includegraphics[width = 10cm]{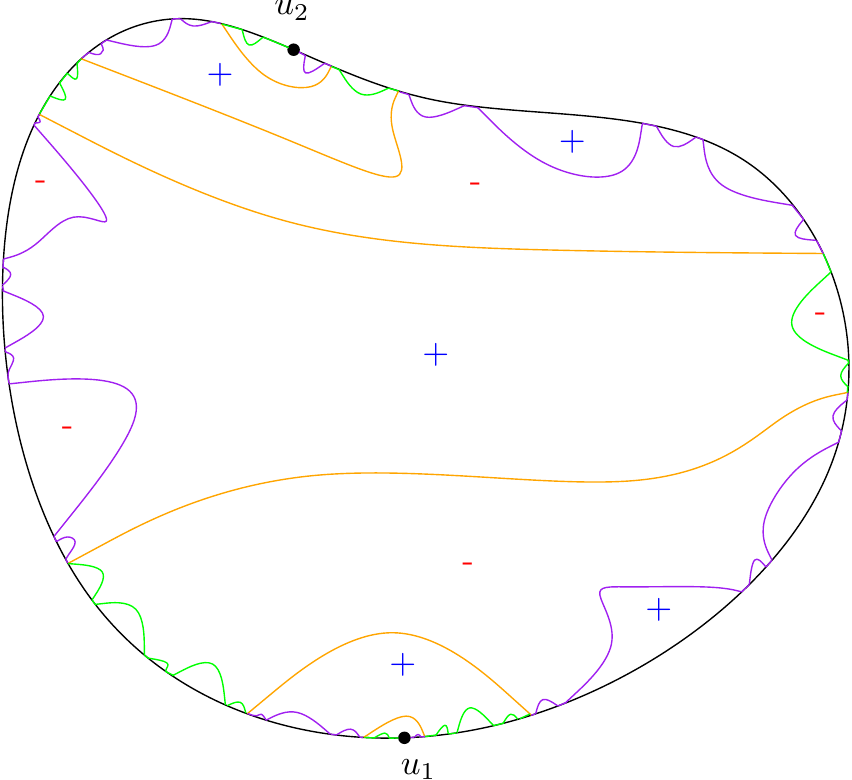}
\caption{The coupling of $\gamma_{u_1,u_2}$ (orange and green) and $\gamma_{u_2,u_1}$ (orange and purple). The purple curves represent SLE$(3,\frac{-3}2)$ in each of the subdomains cut up by the orange and green curves.}
\label{fig:b_a_f}
\end{center}
\end{figure}

\item[(1b)] Now, assume that $\gamma_{u_i,u_j}$ are constructed for every $i,j<n$. In the family of arcs already constructed, there is a unique one, denoted by $\alpha$, that separates $u_n$ from all the other points $u_i$. The path $\gamma_{u_i,u_n}$ follows the exploration tree starting from $u_i$, until it takes the arc $\alpha$. After $\alpha$, all paths $\gamma_{u_i,u_n}$ follow the same SLE$(3,\frac{-3}2)$ in the connected component of $\Omega\setminus \alpha$ containing $u_n$. The paths $\gamma_{u_n,u_i}$ all follow the same trajectory until the arc $\alpha$, which can be build as in (1a). Once the arc $\alpha$ has been crossed, there is a unique way to follow arcs that have already been discovered in order to reach $u_i$, and this is exactly how $\gamma_{u_n,u_i}$ gets to its goal.
\end{itemize}

\begin{prop}\label{prop:FAEclose}
The arc ensemble {\rm FAE'} is the closure of the set of paths with endpoints in $\mathcal{P}$.
\end{prop}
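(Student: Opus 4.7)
The plan is to prove two inclusions between the arc ensemble $\mathrm{FAE}'$ constructed in steps (1a)--(6) and the closure of the countable family $\mathcal{F}:=\{(u_i,u_j,\gamma_{u_i,u_j}):i,j\ge 1\}$ inside $\partial\Omega\times\partial\Omega\times\mathcal{C}(\overline\Omega)$. Throughout, $E\subset \partial\Omega$ denotes the dense countable set of arc endpoints generated by the exploration tree built in (1a)--(1b).

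First I would show that $\mathrm{FAE}'\subset \overline{\mathcal{F}}$. For a generic pair $(u,v)\in (\partial\Omega\setminus E)^2$, I would pick sequences $(u_{i_k},u_{j_k})\to (u,v)$ along $\mathcal{P}\times\mathcal{P}$ and verify that $\gamma_{u_{i_k},u_{j_k}}\to \gamma_{u,v}$ uniformly up to reparametrization, where $\gamma_{u,v}$ is the path built in steps (2)--(4). The argument decomposes $\gamma_{u,v}$ into pieces that traverse the sideways arcs separated by SLE$(3,-\tfrac{3}{2})$ transitions in subdomains; Proposition \ref{prop:dipolar} together with the continuous dependence of SLE on marked points ensures the corresponding pieces of $\gamma_{u_{i_k},u_{j_k}}$ converge. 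For exceptional endpoints (points of $E$, diagonal points, or pairs of arc endpoints as in cases (5)--(6)), each of the one, two, or three paths composing $\gamma_{u,v}$ is produced by approaching $(u,v)$ through $\mathcal{P}$ from the appropriate side of the relevant arc; tightness of $(\gamma_{u_{i_k},u_{j_k}})$ in $\mathcal{C}(\overline\Omega)$ follows from Lemma \ref{lem:boundary_touch} and the boundary estimates of Section \ref{sec:2}.

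For the reverse inclusion $\overline{\mathcal{F}}\subset \mathrm{FAE}'$, I would show that any subsequential limit $(u,v,\gamma)$ of triples in $\mathcal{F}$ has the form described in steps (2)--(6). The key observation is that any sub-path of $\gamma_{u_{i_k},u_{j_k}}$ that traverses a fixed arc $\alpha$ built at some finite stage of (1a)--(1b) must, in the limit, either still traverse $\alpha$ or be separated from $\alpha$ by the position of $(u,v)$; since the arcs built in (1a)--(1b) are dense in the collection of all sideways arcs, this forces the limit $\gamma$ to follow the prescribed sideways arcs and SLE$(3,-\tfrac{3}{2})$ transitions between them, matching exactly one of the paths described in the FAE$'$ construction.

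The main obstacle will be the careful identification of limits near arc endpoints and along the diagonal $\{u=v\}$. Near a point of $E$, different approaching sequences in $\mathcal{P}$ can yield genuinely different limits, corresponding to the multiplicity of paths in cases (5)--(6) and the loops in case (6). Enumerating these limits, and checking that no spurious additional limits appear, requires a case analysis of the local behavior of the $\gamma_{u_i,u_j}$ in a neighborhood of the endpoints of the relevant arcs, which again rests on the consistency of the CDE under a change of observation point (Proposition \ref{prop:dipolar}) and on the convergence result of Section \ref{sec:2}.
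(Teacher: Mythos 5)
Your two-inclusion strategy with case analysis on the type of endpoints (generic, in $E$, diagonal) is essentially the paper's argument, which also reduces the proposition to a case-by-case inspection of the construction. The only cosmetic difference is that you prove $\overline{\mathcal F}\subset\mathrm{FAE}'$ directly by identifying subsequential limits, whereas the paper phrases this as closedness of $\mathrm{FAE}'$; given $\mathcal F\subset\mathrm{FAE}'$ and the forward inclusion, these are equivalent.
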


\begin{proof}
Both the fact that FAE' is included in the closure of the set of paths with endpoints in $\mathcal{P}$ and the fact that FAE' is closed can be checked from the definition of the arc ensemble FAE', by considering the different cases.
\end{proof}

\subsection{Proof of Theorem~\ref{thm:free-arc-cv} (Convergence of the arc ensemble)}

The proof of Theorem~\ref{thm:free-arc-cv}  follows from:

\begin{thm}
\label{thm:free-arc-cv-ii}Let $\Omega$ be a simply connected Jordan domain. Consider the critical Ising model on $\Omega^\delta$ with free boundary conditions.
The family $(\mathcal{A}^{\delta})_{\delta>0}$ converges in law (when the mesh size $\delta$ goes to $0$) to the arc ensembles {\rm FAE} and {\rm FAE'}.
\end{thm}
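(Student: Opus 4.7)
The plan is to proceed in three steps: first establish tightness of the family $(\mathcal{A}^{\delta})_{\delta>0}$ in the Hausdorff topology on $\mathcal{H}(\partial\Omega\times\partial\Omega\times \mathcal{C}(\overline{\Omega}))$; next identify any subsequential limit with FAE$'$ by proving joint convergence of the discrete arcs whose endpoints belong to the countable dense set $\mathcal{P}\subset\partial\Omega$; finally, deduce convergence to FAE by showing that the two constructions yield the same object.

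For tightness, the space $\partial\Omega\times\partial\Omega\times\mathcal{C}(\overline{\Omega})$ is Polish, and a compactness criterion in $\mathcal{H}(\cdot)$ reduces to uniform equicontinuity of the map $(u^\delta,v^\delta)\mapsto \gamma^{\delta}_{u^\delta,v^\delta}$ modulo reparametrization. The uniform continuity in the first two coordinates is automatic, and tightness of individual curves is guaranteed by Proposition~\ref{thm:Loewner}. The remaining point is to show that for every $\varepsilon>0$ there exists $\eta>0$ such that, uniformly in $\delta$ and in the choice of exploration, with probability at least $1-\varepsilon$ every arc $\gamma^{\delta}_{u^\delta,v^\delta}$ with $|u-v|\le\eta$ has diameter at most $\varepsilon$. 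This follows from the RSW-type input Theorem~\ref{thm:cp}: a macroscopic excursion of an exploration arc starting near the boundary would entail the existence of dual crossings in thin topological rectangles of large extremal length, an event whose probability is bounded away from $1$ uniformly, and iterating in annuli \`a la Aizenman--Burchard yields a stretched-exponential decay in the aspect ratio.

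To identify subsequential limits, fix an enumeration $\mathcal{P}=\{u_1,u_2,\ldots\}$ and argue by induction on $n$ that the finite collection $(\gamma^{\delta}_{u_i^\delta,u_j^\delta})_{1\le i,j\le n}$ converges jointly in law to the coupling prescribed in steps (1a)--(1b) of the FAE$'$ construction. The base case $n=2$ is Theorem~\ref{thm:dde-to-cde-cv} for the forward curve $\gamma^{\delta}_{u_1,u_2}$, supplemented, for the reverse curve $\gamma^{\delta}_{u_2,u_1}$, by the observation that conditionally on $\gamma^{\delta}_{u_1,u_2}$ each complementary component of the forward curve carries mixed boundary conditions (spins inherited from the spatial Markov property are $+$ or $-$ on the two sides of the explored curve and free on the original boundary). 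Within each such component, the reverse exploration is the leftmost interface in the corresponding mixed-boundary Ising model, so Theorem~\ref{thm:ci} together with the coordinate change of Remark~\ref{rem:SLEcoord} gives convergence to a conditionally independent SLE$(3,-\tfrac{3}{2})$, precisely the coupling of step (1a). The inductive step is identical: when $u_n$ is added, it lies in a unique complementary component bounded by an already-discovered arc $\alpha^\delta$, and the same spatial Markov plus Theorems~\ref{thm:dde-to-cde-cv},~\ref{thm:ci} produces the required joint law.

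Given this convergence of the rational skeleton, the tightness estimate immediately upgrades it to the whole ensemble: any arc $\gamma^{\delta}_{u^\delta,v^\delta}$ is approximated, uniformly in $\delta$ and with error going to zero with the boundary discretization, by $\gamma^{\delta}_{u_i^\delta,u_j^\delta}$ for $u_i^\delta,u_j^\delta\in \mathcal{P}$ close to $u,v$. Hence any subsequential Hausdorff limit of $\mathcal{A}^\delta$ is the closure of the limiting rational arcs, which by Proposition~\ref{prop:FAEclose} is FAE$'$. This identifies the scaling limit, hence yields convergence in law of $\mathcal{A}^\delta$ to FAE$'$. Finally, the same argument run with $\mathcal{P}$ chosen to contain the root $u$ of the FAE construction, and exploiting Proposition~\ref{prop:dipolar} to see that the branches $\gamma_{u,v_i}$ of FAE satisfy the very coupling prescribed in (1a)--(1b), shows that FAE is the closure of these branches, hence equals FAE$'$, which completes the proof. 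The principal technical obstacle is the joint convergence in step (1a)--(1b) in \emph{random} slit domains: one needs Theorem~\ref{thm:ci} uniformly in the shape of the slit domain and in the location of the force points, and one needs to control how the exploration crosses the (countable but dense) set of arc endpoints, where the driving boundary conditions switch.
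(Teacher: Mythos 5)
Your overall architecture — tightness, identification of the rational skeleton via the spatial Markov property and Theorems~\ref{thm:dde-to-cde-cv}/\ref{thm:ci}, passage to the closure via Proposition~\ref{prop:FAEclose}, and the final identification ${\rm FAE}={\rm FAE}'$ — is the same as the paper's, and your identification step (inductive joint convergence of $(\gamma^\delta_{u_i,u_j})_{i,j\le n}$ to the coupling (1a)--(1b), using spatial Markov in the slit subdomains plus the coordinate change of Remark~\ref{rem:SLEcoord}) is a correct and slightly more detailed account of what the paper states tersely, including your honest flag that Theorem~\ref{thm:ci} must be applied uniformly in random slit domains. The FAE$=$FAE$'$ step and the appeal to Proposition~\ref{prop:dipolar} also match the paper.

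Where you diverge genuinely is the tightness step, and there I think you have a real gap. You attempt to prove tightness of $(\mathcal{A}^\delta)$ by (i) asserting that tightness reduces to ``uniform equicontinuity of $(u,v)\mapsto\gamma_{u,v}$'', (ii) saying that single-curve tightness is Proposition~\ref{thm:Loewner}, and (iii) proving a ``small arcs stay small'' estimate via Theorem~\ref{thm:cp} and Aizenman--Burchard iteration. But Proposition~\ref{thm:Loewner} gives a modulus of continuity (via Condition C2) only for each fixed pair of endpoints; what is needed is a single compact set $K\subset\mathcal{C}(\overline{\Omega})$ that, with probability $\ge 1-\varepsilon$ uniformly in $\delta$, contains \emph{all} the arcs in $\mathcal{A}^\delta$ simultaneously, and your (i)+(iii) do not obviously deliver that. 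In particular, ``uniform continuity in the first two coordinates is automatic'' is not justified: nothing in the setup says that $\gamma^\delta_{u,v}$ and $\gamma^\delta_{u',v'}$ are uniformly close as curves when $(u,v)$ and $(u',v')$ are close, and this is exactly the heart of the matter. The paper handles this with Lemma~\ref{lem:hairy}: discrete leftmost and rightmost explorers share a dense set of edges, which forces explorers with different endpoints to \emph{merge} along a dense set of ``no-return'' edges rather than cross, so every arc in $\mathcal{A}^\delta$ is trapped in the $\varepsilon$-neighborhood of a finite net of explorers whose endpoints are a fixed finite subset of $\mathcal{P}$ — and then one-curve tightness (Proposition~\ref{thm:Loewner}) applied to that finite net does give a uniform compact set. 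That coalescence mechanism is the missing ingredient in your tightness argument; without it, or some substitute with the same effect, the reduction of the ensemble's tightness to individual-curve tightness does not go through.
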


\begin{cor}
The arc ensembles {\rm FAE} and {\rm FAE'} have same law, which is moreover independent of the choice of points made during their constructions.
\end{cor}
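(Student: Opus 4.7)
\medskip

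\noindent\textbf{Proof plan.} The strategy is to prove Theorem~\ref{thm:free-arc-cv-ii} by combining Theorem~\ref{thm:dde-to-cde-cv} (convergence of a single explorer) with the branching structure of discrete explorers, and to deduce the corollary from the fact that the scaling limit is intrinsic. Fix any countable dense set $\mathcal{P}=\{u_1,u_2,\ldots\}\subset\partial\Omega$ and discrete approximations $u_i^\delta\in\partial(\Omega^\delta)^*$.

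\medskip

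\noindent\textbf{Step 1: joint convergence of the finite restriction.} For every finite $n$, I would argue that the family of discrete explorations $(\gamma^\delta_{u_i^\delta,u_j^\delta})_{i,j\le n}$ converges jointly in law to the coupling described in (1a)--(1b) of the construction of FAE$'$. The key point is that discrete explorers enjoy a \emph{branching property}: by construction, $\gamma^\delta_{u_i^\delta,u_j^\delta}$ and $\gamma^\delta_{u_i^\delta,u_k^\delta}$ coincide until the first branching point at which $u_j^\delta$ and $u_k^\delta$ are separated, after which each continues inside the appropriate connected component as an exploration process with spatial-Markov boundary data. Theorem~\ref{thm:dde-to-cde-cv} (applied inductively in the slit domains after each branching arc) identifies the limiting marginals as CDEs, and Theorem~\ref{thm:ci} identifies the bridging laws between consecutive sideways arcs as SLE$(3,\frac{-3}2)$ in the appropriate subdomains, exactly as in (1a)--(1b). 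Tightness for finite collections follows from tightness of each individual explorer (Proposition~\ref{thm:Loewner}).

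\medskip

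\noindent\textbf{Step 2: passage from the countable skeleton to the full ensemble.} I would then show that $\mathcal{A}^\delta$ is close, in the Hausdorff topology, to the set
\[
\bigcup_{i,j}\{u_i^\delta\}\times\{u_j^\delta\}\times\Gamma_{u_i^\delta,u_j^\delta},
\]
with high probability as $\delta\to 0$ and the number of indices grows. This requires an \emph{equicontinuity} statement: for any $\varepsilon>0$, if two boundary points $u,v$ are within distance $\eta(\varepsilon)$ of some $u_i,u_j\in\mathcal{P}$, then any exploration $\gamma^\delta_{u^\delta,v^\delta}$ is $\varepsilon$-close (in the sup-norm up to reparametrization) to a corresponding exploration between the approximating points in $\mathcal{P}$. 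This can be obtained from the crossing estimates of Theorem~\ref{thm:cp} applied in thin rectangular collars near the endpoints, together with Lemma~\ref{lem:hairy} which forces all explorations with the same endpoints to agree up to microscopic fluctuations on a dense set of edges. Combining with Step~1 and passing to subsequential limits, Proposition~\ref{prop:FAEclose} identifies any subsequential limit with FAE$'$.

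\medskip

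\noindent\textbf{Step 3: identification of FAE and FAE$'$ and the corollary.} To show that the same discrete ensemble also converges to FAE, I would apply the same argument but with the countable skeleton organized as a branching tree rooted at a single base point $u$ with leaves $(v_i)$ dense in $\partial\Omega$; the induction in Step~1 then identifies the limit of $(\gamma^\delta_{u^\delta,v_i^\delta})_{i\ge 1}$ with the tree of CDEs constructed in items (1)--(2) of FAE, and Steps (3)--(6) of the FAE construction are forced by the geometry of the limiting arcs (they describe the only consistent way to extend the tree to arbitrary endpoints). Since $\mathcal{A}^\delta$ has a unique scaling limit, FAE and FAE$'$ must have the same law, and since the construction depends on an arbitrary choice of $\mathcal{P}$ (resp.\ of root $u$ and of sequence $(v_i)$) while the discrete object does not, this law is independent of all such choices, proving the corollary.

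\medskip

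\noindent\textbf{Main obstacle.} The most delicate point is Step~2, namely controlling the continuity of $u,v\mapsto\gamma^\delta_{u^\delta,v^\delta}$ uniformly in $\delta$, so that a Hausdorff limit really is exhausted by the countable skeleton. Everything else is either an application of Theorem~\ref{thm:dde-to-cde-cv} in slit domains together with a Skorokhod coupling, or a deterministic check on the limiting arcs.
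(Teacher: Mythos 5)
Your proposal is correct and follows essentially the same route as the paper's proof of Theorem~\ref{thm:free-arc-cv-ii}: joint convergence of explorations between skeleton points to the FAE$'$ coupling (via the branching structure and Theorems~\ref{thm:dde-to-cde-cv} and~\ref{thm:ci}), a tightness/equicontinuity step (your Step~2 matches the paper's tightness lemma, which uses a finite boundary net together with Lemma~\ref{lem:hairy}), and the observation that the single-rooted exploration tree already carries all the arcs and hence determines the full limit, so FAE and FAE$'$ coincide. The independence of the choices then falls out exactly as you say, since the discrete ensemble $\mathcal{A}^\delta$ knows nothing of the scaffolding used to describe its limit.
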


The first step of the proof is a tightness lemma:

\begin{lem}
The family $(\mathcal{A}^\delta)_{\delta>0}$ is tight.
\end{lem}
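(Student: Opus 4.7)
The plan is to establish tightness in the space $\mathcal{H}\bigl(\partial\Omega \times \partial\Omega \times \mathcal{C}(\overline{\Omega})\bigr)$ by reducing to a uniform control on the moduli of continuity of every exploration curve that appears in $\mathcal{A}^\delta$. Since $\mathcal{H}(X)$ with the Hausdorff topology is relatively compact whenever $X$ is, the family $(\mathcal A^\delta)$ is tight as soon as one can find, for every $\varepsilon > 0$, a compact set $K_\varepsilon \subset \partial\Omega \times \partial\Omega \times \mathcal{C}(\overline\Omega)$ with $\mathbb{P}(\mathcal{A}^\delta \subset K_\varepsilon) \geq 1-\varepsilon$ uniformly in $\delta$. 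Because $\partial\Omega \times \partial\Omega$ is compact, this reduces to producing a compact set of curves $\mathcal{K}_\varepsilon \subset \mathcal{C}(\overline\Omega)$ containing every curve appearing as third coordinate of a point of $\mathcal{A}^\delta$ with high probability.

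For the curve component, I would invoke an Aizenman--Burchard-type criterion. A family of curves (considered up to reparametrization) is relatively compact in $\mathcal{C}(\overline\Omega)$ provided one can uniformly bound the Euclidean diameter of short sub-arcs. Concretely, given $\rho>0$, it suffices to show the existence of $\eta>0$ and $k_0\in \mathbb N$ such that, with probability at least $1-\varepsilon$, no exploration path in $\mathcal A^\delta$ makes more than $k_0$ disjoint crossings of any annulus $A(z,\eta,\rho)$ with $z$ in a fixed $\eta$-net $(z_i)_{i \leq N}$ of $\overline\Omega$. The standard argument (see e.g.\ the arguments in \cite{KeSm_LoewnerE} or Aizenman--Burchard) then upgrades this into equicontinuity modulo reparametrization.

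The crux is therefore the annulus-crossing estimate. The key geometric observation is that \emph{every} curve appearing in $\mathcal{A}^\delta$ is an interface leaving $+$ spins on one side and $-$ spins on the other. Hence if some exploration path makes $2k$ disjoint crossings of an annulus $A(z,r,R)$, the underlying Ising configuration must contain $k$ disjoint $+$-crossings (or $k$ disjoint $-$-crossings) of that annulus. For such monochromatic-crossing events I would iterate Theorem \ref{thm:cp} (together with spin-FKG and spatial Markov): the crossing estimate yields a uniform positive probability of a $-$-circuit inside any fixed conformal annulus (by unrolling it into four overlapping rectangles with extremal length bounded by $M$ and using FKG), hence an exponential bound
\[
\mu^{\rm free}_{\Omega^\delta,\beta_c}\!\bigl(\text{$k$ disjoint $+$-crossings of } A(z,r,R)\bigr) \leq C q^k,
\]
with $C,q$ depending only on $R/r$. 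A union bound over the finite net of annuli and a suitable choice of $k_0$ then yields the required uniform estimate on the modulus of continuity.

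The main obstacle I anticipate is the passage from the rectangle-crossing estimate of Theorem \ref{thm:cp} to an \emph{annulus} crossing estimate valid uniformly in $\delta$ and in the realization of the boundary conditions generated by already-discovered paths. One must be careful that the monochromatic-crossing estimate used in the iteration is preserved when conditioning on the innermost crossings and facing arbitrary $+/-/\text{free}$ boundary conditions on the outside; this is handled via monotonicity of the Ising measure in boundary conditions and the fact that the crossing input of Theorem \ref{thm:cp} is stated in a form robust enough to apply in arbitrary discrete topological rectangles. Once this RSW-type step is secured, the rest of the tightness proof follows the standard template.
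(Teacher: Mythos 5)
Your approach is genuinely different from the paper's. The paper leverages the already-established tightness of a \emph{single} exploration path (Proposition~\ref{thm:Loewner}, obtained through the Kemppainen--Smirnov framework, which needs only the one-sided crossing bound of Theorem~\ref{thm:cp} for \emph{avoidable} rectangles) together with Lemma~\ref{lem:hairy}: one picks a fine net $u_1,\dots,u_N$ on $\partial\Omega$, notes that the finitely many explorers with these endpoints are tight and, by the behaviour of the CDE, screen the boundary; the hairy lemma then traps every other exploration in an $\varepsilon$-neighbourhood of that finite family. You instead attempt a direct Aizenman--Burchard argument, bounding disjoint annulus crossings of the curves by disjoint monochromatic crossings of the underlying Ising configuration. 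That observation is a good one --- the configuration is common to all paths in $\mathcal{A}^\delta$, so a single crossing bound on the configuration controls every pair of endpoints simultaneously --- and it does give a self-contained route to tightness, bypassing both Proposition~\ref{thm:Loewner} and Lemma~\ref{lem:hairy}.

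However, there is a genuine gap in the central estimate. The iteration yielding $\mu\bigl(k\text{ disjoint }+\text{-crossings of }A(z,r,R)\bigr)\leq Cq^k$ requires, after conditioning on the innermost $+$-crossing, a uniform lower bound on the probability of a $-$-circuit in a thinner concentric annulus. By the spatial Markov property the conditioning installs a $+$ arc on the inner boundary, and by monotonicity the bound you need is hardest when \emph{all} boundary conditions are $+$. Theorem~\ref{thm:cp} as quoted in the paper only gives a crossing lower bound for the specific mixed (free/$-$) boundary conditions. You claim monotonicity bridges the gap, but it goes the wrong way: a $-$-circuit is a decreasing event and $\mu^{+}$ stochastically dominates $\mu^{\mathrm{free}}$, so a free-boundary lower bound does not yield a $+$-boundary lower bound. (This is precisely why the paper's proof of Proposition~\ref{thm:Loewner} restricts to \emph{avoidable} rectangles: avoidability guarantees the rectangle never touches both $\mathcal C^+_\tau$ and $\mathcal C^-_\tau$, so the unfavourable boundary configuration never occurs and the monotonicity argument really does push towards the boundary conditions of Theorem~\ref{thm:cp}.) The uniform-in-boundary-conditions RSW estimate you need does hold --- it is the main result of \cite{ChDuHo_CrosProbIs} --- but it is strictly stronger than the statement recorded here as Theorem~\ref{thm:cp}, and must be invoked explicitly; the monotonicity remark cannot replace it.

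A secondary issue worth stating: crossings of annuli near $\partial\Omega$ need separate treatment, since an exploration path running along the free boundary has one side outside $\Omega^\delta$ and such a crossing contributes no monochromatic arc. As $\partial\Omega$ is a fixed Jordan curve it meets each annulus of a finite net in boundedly many arcs, so this only costs an additive constant in the crossing count, but the bookkeeping belongs in the proof rather than being absorbed silently.
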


\begin{proof}Consider $\varepsilon>0$ and let $\eta=\eta(\varepsilon)>0$ to be chosen shortly. Choose a finite subset $\{u_1,\dots,u_N\}$ of $\mathcal{P}$ that cuts the boundary of the domain $\Omega$ in arcs of diameter less than $\eta$. The set of all leftmost and rightmost explorations starting from and aiming at the points $u_1,\dots,u_N$ form a tight family for the topology of uniform convergence up to reparametrization (as each explorer is close to a CDE). By chosing $N$ big enough, we can ensure that no boundary point can be connected in $\Omega$ to points $\epsilon$-inside the domain without crossing one of the finitely many excursions (thanks to the behavior of CDE, and by Lemma \ref{lem:boundary_touch}).

Using Lemma \ref{lem:hairy}, we see that with high probability, no exploration starting and ending at points of $\partial\Omega^\delta$ goes out of the $\varepsilon$-neighborhood of the explorations starting and ending at points of $\{u_1,\dots,u_N\}$. The tightness then follows from the tightness of single exploration paths.\end{proof}

\begin{proof}[Proof of Theorem~\ref{thm:free-arc-cv-ii}]Consider a sub-sequential limit $\mathcal{A}$ of $(\mathcal A^\delta)_{\delta>0}$. From the convergence of explorations Theorems \ref{thm:dde-to-cde-cv} and \ref{thm:ci}, we see that the explorations with endpoints in $\mathcal{P}$ have the same joint law as the corresponding explorations in FAE'. By definition, $\mathcal{A}$ is closed, and hence ${\rm FAE}'\subset\mathcal{A}$ by Proposition \ref{prop:FAEclose}. To prove the reverse inclusion $\mathcal{A}\subset{\rm FAE}'$, it is enough to show that
almost surely, any path in $\mathcal{A}$ is in the closure of the set of paths with endpoints in $\mathcal{P}$.
But this follows directly from the argument given for tightness.

We can now show that FAE and FAE' are the same object. Indeed, consider the limit of the discrete exploration tree from a point $u$ to a countable dense set $v_i$. This is a subset of $\mathcal{A}$, which has the same joint law as the exploration tree of FAE. Moreover this tree contains all the arcs appearing in the set of paths of $\mathcal{A}$ with endpoints in $\mathcal{P}=\{u,v_1,v_2,\ldots\}$ and so the whole limit $\mathcal{A}={\rm FAE}'$ can be rebuilt from the arcs of this exploration tree alone. In particular, the arc ensembles FAE and FAE' have the same law.
\end{proof}

\bigbreak\noindent
\textbf{Acknowledgments.} The authors thank Julien Dub\'{e}dat, Stanislav Smirnov and Wendelin Werner for interesting and stimulating discussions. The research of H. D.C. was supported by the ERC grant CONPASP and the NCCR Swissmap founded by the Swiss NSF. The research of C. H. was supported by the NSF grant DMS-1106588 and the Minerva Foundation.

\bibliography{biblio}{}
\bibliographystyle{plain}

\end{document}